\newcommand{\mytitle}{On exterior calculus and curvature in
  piecewise-flat manifolds}
\newcommand{\defect}[1]{\ensuremath{\varepsilon_{#1}}}
 \newcommand{\ls}{\ensuremath{\ell}}
\newcommand{\vs}{\ensuremath{v}}
\newcommand{\tris}{\ensuremath{t}}
\newcommand{\tets}{\ensuremath{T}}
\newcommand{\polys}[1]{\ensuremath{s^{(#1)}}}
\newcommand{\ld}{\ensuremath{\lambda}}     %%%  dual edge
\newcommand{\vd}{\ensuremath{\nu}}            %%%  dual vertex
\newcommand{\polyd}[1]{\sigma^{(#1)}}               %%%  dual polytope
\newcommand{\hinge}{\ensuremath{h}}
\newcommand{\volav}[2]{ \ensuremath{\widetilde{\left\langle  #1
      \right\rangle_{#2}   }}}
\newcommand{\areaav}[2]{ \ensuremath{{\left\langle  #1 \right\rangle_{#2}   }}}
\newcommand{\circum}[1]{\ensuremath{{\cal C}_{#1}}}
\newcommand{\irrhyb}[1]{\ensuremath{V_{#1}}}
\newcommand{\Mani}{\ensuremath{{\cal M}}}
\newcommand{\TriMan}{\ensuremath{{\cal T}}}
\newcommand{\PFtri}{\ensuremath{{\cal T}_{0}}}
\newcommand{\latbound}{\ensuremath{\tilde{\delta}}}
\newcommand{\latcobound}{\ensuremath{\tilde{d}}}
\newcommand{\LIP}[2]{\ensuremath{        \left( #1 , #2   \right)              }}
\newcommand{\TSIP}[2]{\ensuremath{        \left\langle #1 | #2   \right\rangle              }}
\newcommand{\sovVec}[1]{\ensuremath{\tilde{e}_{#1}}}
\newcommand{\sovForm}[1]{\ensuremath{\tilde{e}^{#1}}}
\newcommand{\TSVec}[1]{\ensuremath{{e}_{#1}}}
\newcommand{\TSForm}[1]{\ensuremath{{e}^{#1}}}
\newcommand{\MomArm}[2]{\ensuremath{M_{#1 #2}}}
\newcommand{\Solder}[1]{\ensuremath{m_{#1}}}
\newcommand{\refEQ}[1]{Eq.~(\ref{#1})}
\newcommand{\refFig}[1]{Figure~\ref{#1}}
\newcommand{\refSec}[1]{Sec.~\ref{#1}}
\newtheorem{defines}{Definition}
\newtheorem{thm}{Theorem}
\newtheorem{cor}{Corollary}
\newcommand{\comment}[1]{}
\begin{document}

\title{\mytitle}%

\author{ Jonathan R. McDonald}	%
\email{jmcdnld@gmail.com}
\affiliation{Department of Mathematics, Harvard University, Cambridge, Massachusetts, 02138}
\affiliation{Air Force Research Laboratory, Information Directorate,
  Rome, New York, 13441}
\author{Warner A. Miller}
%\email{wam@fau.edu}
\affiliation{Department of Mathematics, Harvard University, Cambridge, Massachusetts, 02138}
\affiliation{Department of Physics, Florida Atlantic University, Boca Raton, Florida, 33431}
\author{ Paul M. Alsing}	%
%\email{paul.alsing@rl.af.mil}
\affiliation{Air Force Research Laboratory, Information Directorate,
  Rome, New York, 13441}
\author{Xianfeng David Gu}
%\email{gu@cs.sunysb.edu}
\affiliation{Department of Computer Science, Stony Brook University, Stony Brook, New York 11794}
\author{Xuping Wang}
%\email{xwang14@fau.edu}
\affiliation{Department of Physics, Florida Atlantic University, Boca
 Raton, Florida 33431}
\author{Shing-Tung Yau}
%\email{yau@math.harvard.edu}
\affiliation{Department of Mathematics, Harvard University, Cambridge, Massachusetts, 02138}

\date{\today}%
\begin{abstract}
  Simplicial, piecewise-flat discretizations of manifolds provide a
  clear path towards curvature analysis on discrete geometries and for
  solutions of PDE's on manifolds of complex topologies.  In this
  manuscript we review and expand on discrete exterior calculus
  methods using hybrid domains.  We then analyze the geometric
  structure of curvature operators in a piecewise-flat lattice.
\end{abstract} 
\maketitle
%\tableofcontents

\section{Introduction}\label{sec:intro}

%%%%%% PF manifolds in RC are often given by continuous approximations
%%%%%% to the discrete structure, DEC approaches give us lattice
%%%%%% element differential forms, here we propose a fully discrete
%%%%%% approach that is based on a Volume DEC (VDEC) approach to PF
%%%%%% manifolds.  This is helpful when we consider the Hodge duality,
%%%%%% Riemann curvature, and analysis of curvature forms in the
%%%%%% manifold.  It maps fully into a discrete view of the manifold
%%%%%% that smears out the distributional quantities to the resolution
%%%%%% of the lattice and assigns domains of support to the functions.

Piecewise-flat (PF) simplicial manifolds are a crucial computational
framework for systems with dynamic geometry or systems with complex
topologies/geometries.  In general relativity, PF manifolds are a
cornerstone of the coordinate-free discretization introduced by Regge
\cite{Regge:1961} often referred to as Regge calculus (RC).  RC is
often regarded as the backbone of the semi-classical or low-energy limit of,
or even an effective theory \cite{Williams:1992} of,  quantum gravity.
It has also proven to be a useful computational tool in numerical
relativity \cite{Gentle:2002}.  For numerical solutions to partial
differential equations, PF manifolds are one clear method to
discretize complex geometries over which the differential equations
act \cite{Arnold:2006} or in conformal transformations on 2
dimensional surfaces of arbitrary genus \cite{Gu:2003}.

One may view RC as an approach to characterizing the intrinsic
geometry of PF manifolds.  In other regards the exterior calculus
prescribes the calculus of fields on the curved background of smooth
manifolds.  Exterior calculus approaches to functions and fields on
and the curvature tensors of the PF manifold have been of great use in
preserving geometric notions on the discrete manifolds.  In the
canonical approach to RC, one can describe an exact action principle
\cite{FL:1984, Miller:HARC}, yet the discrete curvature tensors give
only approximate expressions after smoothing over the discontinuities.
The exterior calculus methods were also quite useful in developing a
clearer understanding of the Einstein tensor in RC \cite{Miller:1986,
  McDonald:BBP}.  In the numerical analysis for PDE's on PF manifolds,
discrete exterior calculus (DEC) is used to preserve geometric
symmetries of continuous systems on discrete manifolds
\cite{Desbrun:DEC} and for providing the bedrock on which to construct
geometric flows on complex topologies. \cite{Gu:2002, Gu:2003} The
latter has been extensively studied on 2D
surfaces.\cite{Gu:CompConfGeom, Gu:2004, Gu:2005a, Gu:2005b, Gu:2008}

In recent work we have extended and applied the methods of RC and DEC
to derive a simplicial discretization of Hamilton's Ricci flow
\cite{McDonald:RF}.  As we apply RC and discrete forms to dynamic, but
not necessarily covariant, geometric flows on PF manifolds, it is
necessary to develop a deeper understanding of the nature of curvature
and exterior calculus in discrete geometries.  In this manuscript we
show how the curvature in PF manifolds gives rise to seemingly
distinct notions of curvature tensors: (1) curvature with a single
sectional curvature or (2) isotropic curvatures similar to those of
Einstein spaces.  \refSec{subsec:canRC} and \refSec{subsec:DEC} will
review canonical RC and the standard approaches to discrete
differential forms.  Then in \refSec{sec:LocalForms} we discuss the
geometric principles behind hybrid cells as local measures and show
how these hybrid volumes are core elements of a volume-based DEC.  In
\refSec{sec:CurvPF} we discuss the representation of curvature
operators over the hybrid measures and transformations between them.

\subsection{Canonical Regge Calculus}\label{subsec:canRC}
%%%% In this section we build up the theory of RC based on the model
%%%% that functions on the manifold are analyzed in the piecewise-flat
%%%% domains where a clear interpretation of flat space is prevalent.
%%%% This follows Sorkin, TD Lee, Williams/Rocek.

Suppose $\Mani$ is a $d$-dimensional, smooth manifold endowed with a
simplicial complex $\TriMan$.  A PF triangulation of
$\Mani$, $\PFtri$, is a mapping from each $d$-simplex to a flat
$d$-simplex in $\mathds{R}^{d}$ such that the proper lengths of the
edges in the $1$-skeleton of $\TriMan$ are preserved in $\PFtri$.  The
simplicial manifold formed by $\PFtri$ is often called a Regge
manifold or Regge skeleton.  We now review the canonical approach to
RC,  as in  \cite{Sorkin:Evol, FL:1984}.

The interior geometry of any given simplex in $\PFtri$ is given by
Euclidean or Minkowski geometry and represents a common tangent
space for each of the vertexes of the simplex.  Any such simplex has
an induced metric that is uniquely determined by the proper
squared-edge lengths of the simplex.  The metric as a function of the
edge lengths $g_{\mu\nu}(\ell^{2})$ gives the local,
piecewise-constant approximation to the metric associated with
$\Mani$.  For two simplexes sharing a common boundary, the joint
domain is isomorphic to $\mathds{R}^{d}$ and thus is intrinsically
  flat.  While the two simplexes as viewed from an observer off the
  manifold may appear curved, i.e. with non-zero extrinsic curvature,
  a loop of parallel transport from simplex $A$ to simplex $B$ and
  back induces no change in orientation on a tangent vector in general
  position.  Formally this is related to the requirement of the
  existence of a metric compatible connection in the PF manifold.

While there exists a flat connection across any $(d-1)$--boundary,
curvature naturally arises when generating a map from a complete
set of $d$-simplexes sharing a common $(d-2)$--simplex to
$\mathds{R}^{d}$.  This is the first indication of curvature in the PF
manifold.  To effectively handle the discontinuity in mapping the
neighborhood of a $(d-2)$-simplex, or codimension-$2$ hinge $h$, we
can map the neighborhood of $h$ to a subspace of $\mathds{R}^{d}$ and
smoothly continue the mapping across the removed section of
$\mathds{R}^{d}$ resulting from `breaking' a $(d-1)$-simplex into two.
By breaking a $(d-1)$-simplex into two to make the mapping to
$\mathds{R}^{d}$, a defect in the correspondence between the interior
angles of the simplexes at $h$ and $2\pi$ is evident
(\refFig{Fig:defect}).  We take the deviation of the interior
angles of each simplex on $h$ from an exact embedding in flat space to
be the defect angle $\defect{h}$ associated to $h$ ;
\begin{equation}\label{eq:defect}
  \defect{h}   =   2\pi - \sum_{i|_{h}} \theta_{i}.
\end{equation}
This defect angle is the measure of curvature associated with parallel
transport of a vector around a loop that encircles $h$.  Indeed, if we
take any vector with components in the plane $\hat{h}^{*}$ orthogonal
to $h$ and transport it around the boundary of any area
$\sigma^{\alpha\beta}$ (also with components in $\hat{h}^{*}$), then
the vector will have rotated by an amount equal to $\defect{h}$, and
the rotation occurs in the plane of $\hat{h}^{*}$.  Any loop
$\sigma^{\alpha\beta}$ with components in $\hat{h}^{*}$ will generate
such rotations, independent of the area enclosed.  This has
implications for the sectional curvature associated with $h$.  The
sectional curvature associated with the loop $\sigma^{\alpha\beta}$ is
given by
\begin{equation}\label{eq:SecCurv}
  K_{\alpha\beta} = \frac{(\text{Angle of Rot'n})}{(\text{Area Enclosed})} = 
  \frac{\defect{h}}{|\sigma^{\alpha\beta}|}.
\end{equation}
Since the angle of rotation is independent of the area enclosed, we
can take an infinitesimal area of rotation encircling $h$ whose limit,
$|\sigma^{\alpha\beta}|\rightarrow 0$, impling a singularity in the
sectional curvature.  The singularity is known as a conic singularity
associated to the hinge $\hinge$.

\begin{center}
\begin{figure}[t]
\includegraphics[width=1.5in]{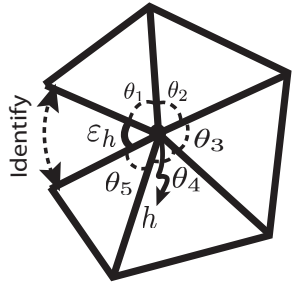}
\caption{$2$-dimensional projection of a mapping of the neighborhood
  of a hinge $h$ to $\mathds{R}^{d}$ with defect
  $\defect{h}$.}\label{Fig:defect}
\end{figure}
\end{center}

%%% Correspondence between the above notion and the Riemann tensor is
%%% difficult, but can be suitable achieved by integrating over a loop
%%% and using the above.

We can provide a representation for the sectional curvature for a loop
of parallel transport, but can we also provide a clear expression for
the Riemann curvature?  From the continuum theory with infinitesimal
rotations we can express the rotation of a vector $A^{\mu}$ after
transport along the boundary of an infinitesimal area by
\begin{equation}\label{eq:ContVecTrans}
\delta A^{\nu} = -R^{\nu}_{\ \ \mu \alpha\beta} A^{\mu} {\bf d}\sigma^{\alpha\beta}.
\end{equation} 
However, the curvature in PF manifolds is characterized by finite
rotations.  Friedberg and Lee \cite{FL:1984} showed that a
correspondence between the above notions of defect angle and a Riemann
tensor can be approximately given by
\begin{subequations}
\begin{equation} \label{eq:LeeRiem}
R_{12\ 12}(h) \approx \defect{h} \delta(x_1) \delta(x_{2})  \approx -R_{12 \ 21}(h)
\end{equation}
\begin{equation} \label{eq:LeeRiem1}
R_{1\ 1}(h) \approx \defect{h} \delta(x_1) \delta(x_{2})  \approx R_{2 \ 2}(h)
\end{equation}
\begin{equation} \label{eq:LeeScalCurv}
R(h) \approx 2\defect{h} \delta(x_1) \delta(x_{2}) 
\end{equation}
\begin{equation} \label{eq:LeeLagrang}
\sqrt{|g|}R(h) = 2\defect{h} \delta(x_1) \delta(x_{2}),
\end{equation}
\end{subequations}
where Eq.~(\ref{eq:LeeLagrang}) is exact and the coordinates $\{x_1,
x_2\}$ lie in the plane of $\hat{h}^*$.  In this approach, where one
 smooths out the discontinuities, one generates a limiting
sequence of surfaces approximating the plane $\hat{h}^{*}$ and takes
the limit to the PF surface.  This generates Dirac delta distributions
such that the curvature is evaluated only on the hinge $h$ and zero
elsewhere.  While the curvature tensors are only approximate in this
sense, the integrand of the Einstein-Hilbert action,
$$I_{\text{E-H}} = \frac{c^{4}}{16\pi G}\int \sqrt{|g|} R d^{d}x$$
is exact.  Thus we can take the standard action principle and have an
exact expression for the lattice geometry (first locally),
\begin{equation}\label{eq:REHaction}
\frac{c^{4}}{16\pi G}\int_{V_{h}} \sqrt{|g|} R d^{d}x = \frac{c^4}{8\pi G}\defect{h} A_{h}
\end{equation} 
where the integration is over a domain containing a single $h$.  The
global expression is obtained by summing over all hinges,
\begin{equation}\label{eq:REHaction1}
\frac{c^{4}}{16\pi G}\int \sqrt{|g|} R d^{d}x = \frac{c^4}{8\pi G}\sum_{h\in \PFtri}\defect{h} A_{h}.
\end{equation}

%%%%%%%%%%%%%%%%%%%%%%%%%%%%%%%%%%%%%%%%%%%%%%%%%%%%
%%%%%%%%%%%%%%%%%%%%%%%%%%%%%%%%%%%%%%%%%%%%%%%%%%%%

\subsection{Discrete Exterior Calculus}\label{subsec:DEC}
%%%% Here we develop the analysis of functions based on DEC/Comp EM
%%%% where field variables are assigned as integrated variables over
%%%% lattice k-elements.  This is a progression to our interpretation
%%%% of field variables defined 'locally' over hybrid domains.  This
%%%% follows Whitney, Gu, Bossavit, Desbrun/Hirani/Stern.

We now move away from the explicit geometry of PF manifolds to the
properties of differential forms on simplicial manifolds.  The
framework we will follow below has stemmed from the work of Whitney
\cite{Whitney:book} and, later, Bossavit \cite{Bossavit:1991}.  Recent
use of differential forms in PF manifolds corresponds to surface
parameterization \cite{Gu:2002} or finite-element methods
\cite{Arnold:2006, Desbrun:DEC}.  The use of differential forms in the
simplicial lattice has been independently used in RC (see for example
\cite{Sorkin:EM, Warner:1982, Barrett:1987, Miller:1986}).  In the
description below, we will follow some of the notation and conventions
of \cite{Desbrun:DEC}.

The algebraic structure of the simplicial lattice is such that we have
a natural representation of a discrete chain complex,
\begin{equation} \label{eq:chaincomp}
0 \stackrel{\latbound}{\rightarrow } \polys{d} \stackrel{\latbound}{\rightarrow} \polys{d-1}\stackrel{\latbound}{\rightarrow} \cdots\stackrel{\latbound}{\rightarrow} \polys{1} \stackrel{\latbound}{\rightarrow} \polys{0} \stackrel{\latbound}{\rightarrow} 0,
\end{equation}
and a discrete co-chain complex,
\begin{equation} \label{eq:cochaincomp}
0 \stackrel{\latcobound}{\rightarrow } \polys{0} \stackrel{\latcobound}{\rightarrow} \polys{1}\stackrel{\latcobound}{\rightarrow} \cdots\stackrel{\latcobound}{\rightarrow} \polys{d-1} \stackrel{\latcobound}{\rightarrow} \polys{d} \stackrel{\latcobound}{\rightarrow} 0.
\end{equation}
Moreover, given a geometric dual lattice to the simplicial skeleton one can
construct the dual chain and co-chain complexes.   

Given a $p$-form $\omega$ on the smooth manifold $\Mani$, we seek a
representation of $\omega$ on the PF manifold $\PFtri$.  The approach
taken in DEC discretizations is to take the smooth image of $\PFtri$
on $\Mani$, i.e. $\TriMan$, and evaluate $\omega$ on the $p$-skeleton
of $\TriMan$.  The simplicial approximation to $\omega$ is obtained by
integrating over individual elements of the $p$-skeleton of $\TriMan$
and associating that value with the image of the element in $\PFtri$;
\begin{equation}\label{eq:SimpForm}
\TSIP{\omega}{\polys{p}}:=\omega_{0}(\polys{p})  = \int_{\polys{p}}  \omega.
\end{equation}
Here we use the notation that $\polys{p}$ denotes an element of the
$p$-skeleton of $\TriMan$.  In general, we will use $\polys{p}$ to
denote both elements of $\TriMan$ and $\PFtri$. In the standard RC
approach to discrete differential forms, the discrete forms may often
be denoted by subscripts, e.g. $\omega_{\ell} = \frac{1}{|\ell|}
\TSIP{\omega}{\ell}$ will generally denote a one-form $\omega$
associated with an edge $\ell$ in the simplicial lattice.  Only when
we are considering the initial discretization will $\polys{p}$ be in
$\TriMan$.  Once the discrete differential form is assigned, all
further manipulations take place within $\PFtri$.

In addition to simplicial forms in $\PFtri$, one can also discretize
$\omega$ on the dual lattice.  Ambiguity arises here due to the
multitude of ways of assigning a dual lattice.  In general,
there exist several natural geometric dual lattices, e.g. barycentric,
circumcentric or incentric dual lattices, but there are also numerous
non-intuitive dualities that can be constructed for an arbitrary
simplicial lattice \cite{Richter:Diss}.  For our purposes and for
clarity in later sections, we take the circumcentric dual as our
prescribed dual lattice.  The circumcentric dual of a $k$-simplex is
given by \cite{Hirani:DEC}
\begin{align}
\star\polys{k} := \sum_{\polys{p}\ni \polys{k}:\ p>k} \text{sgn} \left[ \circum{k}, \circum{k+1}, \ldots, \circum{d}\right], 
\end{align}
where $\circum{p}$ is the circumcenter of a $\polys{p}$ and
$\text{sgn}$ is used to ensure an orientation consistent with the
encompassing $\polys{d}$. The circumcentric dual has several nice
properties that make it a natural choice: (1) a simplicial element and
its circumcentric dual are (locally) orthogonal to one another, (2)
the $p$-dimensional dual to a $p$-element is equidistant from each
vertex on the $p$-element, and (3) in special cases the circumcentric
dual lattice corresponds to the Voronoi lattice generated by the
$0$-skeleton of the simplicial lattice.

Once one has a dual lattice, discrete differential dual forms are
obtained analogously to the simplicial forms.  For a $p$-form $\omega$
and a $p$-element $\polyd{p}$ of the dual lattice, $\omega$ associated
to $\polyd{p}$ is given by
\begin{equation}\label{eq:DualForm}
\TSIP{\omega}{\polyd{p}}:= \omega(\polyd{p}) =\ \int_{\polyd{p}} \omega.
\end{equation}
This provides a way to directly discretize continuous forms on the
dual lattice.  Our next step is to show how to transform discrete
forms on one lattice to forms on the other.

The discrete Hodge (or $\star$) dual is an isomorphism
between the simplicial $k$-forms and the dual $(d-k)$-forms.  Given
that the definitions of the dual/simplicial forms are integrated
quantities over their respective lattice elements, we can assign an
average scalar density to the lattice element by dividing by the
volume of the lattice element.  The Hodge dual isomorphism then says
that these average scalar densities are equal for two dual elements;
\begin{equation}\label{eq:DisHodge}
  \frac{1}{A_{\polys{k}}} \TSIP{\omega}{\polys{k}}= \frac{1}{A_{{\star\polys{k}}} } 
\TSIP{\star \omega}{\star\polys{k}},
\end{equation}
where $A_{\polys{k}} = |\polys{k}|$ denotes the integrated measure,
i.e. norm, of $\polys{k}$. If we take the dual lattice to be the
circumcentric dual, then the relation in \refEQ{eq:DisHodge} says that
integration over a volume spanned by $\polys{k}$and $\star\polys{k}$
is preserved under the Hodge dual, i.e.  the discrete manifestation of
the self-adjointness of $\star$ in the $L^{2}$-inner product.

It is useful to note here a property of the Hodge dual in the lattice
that will be useful later on.  As in the continuum, if one scales a
differential $k$-form $\omega$ by a scalar $\alpha$, then the dual
$\star\left(\alpha\omega\right)$ has the same orientation as
$\star\omega$.  In the simplicial lattice the Hodge dual acting on a
subspace of a lattice element $\polys{k}$ returns back the lattice
element $\star\polys{k}$, possibly with a scalar coefficient.  In
particular, if one takes the dual of the portion of a triangle $\tris$
closest to an edge $\ls$, then one simply obtains the dual element to
$\tris$ itself.  The dual lattice element $\tris^*$ orthogonal to
$\tris$ lies on the face of the polytope $\ls^*$ and so entirely lies
in the convex hull of $\ls$ and $\ls^*$. Moreover it is orthogonal to
$\tris$ and therefore trivially orthogonal to the subspace of $\tris$
closest to $\ls$.

Given a general dual lattice to the simplicial lattice and the Hodge
dual, we can construct the complex
\begin{equation}\label{eq:SimpDualComplex}
\begin{CD}
0 @>\latcobound>>  \polys{0}   @>\latcobound>>  \cdots   @>\latcobound>>   \polys{d}  @>\latcobound>>  0 \\
@VV \star V      @VV \star V    @VV  \star V   @VV \star V  @VV \star V\\
0 @>\latbound>>  \polyd{d}   @>\latbound>>  \cdots   @>\latbound>>   \polyd{0}  @>\latbound>>  0
\end{CD}
\end{equation}
containing the chain and co-chain complexes in the dual and simplicial
lattices.  In the above complex, the operators $\latcobound$ and
$\latbound$ define the co-boundary and boundary operators,
respectively.

The next ingredient in the calculus of discrete differential forms is
the exterior derivative $d$ which maps $k$-forms to $(k+1)$-forms,
\begin{equation}\label{eq:DisExtDer}
  \TSIP{d \omega}{\polys{k+1}} = \TSIP{\omega}{\latbound \polys{k+1}} = 
\sum_{\polys{k} \in \polys{k+1}} \TSIP{\omega}{\polys{k}}. 
\end{equation}
Here we have used the lattice boundary operator (via Stokes' Theorem)
such that the action on a $k$-element of the simplicial $k$-skeleton
returns its $(k-1)$-boundary. The discrete exterior derivative allows
us to express $d\omega$ in terms of the valuations of $\omega$ on the
boundary of a given $\polys{k+1}$.  By replacing the
$\polys{k}$($\polys{k+1}$) with $\polyd{k}$ ($\polyd{k+1}$) we can
translate the discrete exterior derivative from simplicial forms to
dual forms.
$$
  \TSIP{d \omega}{\polyd{k+1}} = \TSIP{\omega}{\latbound \polyd{k+1}} = 
\sum_{\polyd{k} \in \polyd{k+1}} \TSIP{\omega}{\polyd{k}}.
$$

Similarly the exterior coderivative, $\delta = \star d \star$, on a
$(k-1)$-form can be derived by using the adjoint relationship (in the
continuum) between $d$ and $\delta$,
\begin{equation}\label{eq:DisExtCoDer}
  \TSIP{\delta \omega}{\polys{k-1}} = \TSIP{\omega}{\latcobound \polys{k-1}} = 
  \sum_{\polys{k} \ni \polys{k-1}} \TSIP{\omega}{\polys{k}}.
\end{equation}
Again, the same relationship holds on the dual forms
$$
  \TSIP{\delta \omega}{\polyd{k-1}} = \TSIP{\omega}{\latcobound \polyd{k-1}} = 
  \sum_{\polyd{k} \ni \polyd{k-1}} \TSIP{\omega}{\polyd{k}}.
$$
The co-derivative on discrete forms requires only our notion of the
Hodge dual and the exterior derivative and so the above expressions
come as a direct result of the adjoint relationships of these two
operators in $\TSIP{\cdot}{\cdot}$.  We can, in fact, rebuild much of
the algebra and calculus on exterior forms with these basic
building blocks.  The wedge product can also be reconstructed on the
discrete forms as shown by Desbrun {\em et al.} \cite{Hirani:DEC}.  We
do not reproduce the wedge product here, but only mention that this
wedge product will, in general, be non-associative in the discrete
scale.  Associativity, however, is recovered in the continuum limit.
One can construct an associative wedge product, but such a
construction will fail to preserve the anti-commutativity property.
It is generally a feature of discrete physics with finite angles that
certain continuum symmetries fail at the discrete level only to be
regained in the infinitesimal edge-length limit.

\section{The Local Structure of Discrete Forms}\label{sec:LocalForms}

We have described thus far the previous approaches to analysis of
curvature and differential forms on a \PFtri.  What was evident when
we examined the discrete forms and the relationship between forms on
the dual lattices was that the integrated measure of a lattice
differential form was preserved under the Hodge dual.  Given this
preservation, we can interpret the discrete forms as measures over a
volume spanned by a lattice element and its dual, i.e. the convex hull
of the vertexes of a lattice element and the vertexes of the dual
element.  This suggests a volume-based approach to discrete
forms. Since many of the operations applied to discrete forms rely on
transforming one discrete form on a $k$-skeleton to a discrete form on
a $p$-skeleton (where we may have $p\neq k$), we will be required to
transform objects from one domain to a non-coinciding (but possibly
overlapping) domain of integration.  We now turn to the local
properties of discrete forms and their inherent domains of support.
This will allow us to reconstruct transformations between lattice
elements in terms of hybrid simplicial-dual volumes, henceforth called
hybrid cells or hybrid volumes.

\subsection{A Menagerie of Hybrid Cells}\label{subsec:hybrids}

A hybrid volume is a domain that is at once the local measure of a
lattice element $\polys{k}$ and the orthogonal subspace dual to
$\polys{k}$.  For standard tensor analysis it is sometimes convenient
to take the $d$-simplexes as the local domains over which a function
or tensor is piecewise evaluated.  This is due to the ability to
define an unambiguous tangent space to any $d$-simplex in $\PFtri$.
However, an arbitrary $k$-form will, in general, be contained in
multiple $d$-simplexes and thus requires junction conditions to hold
across the multiple coordinate charts assigned to the $d$-simplexes
sharing a given $k$-simplex.  We will show how hybrid cells can be
viewed as natural domains for differential forms in $\PFtri$ such that
there are local orthogonal frames containing the carrier of the
geometric content of the discrete form. The lattice element hybrid
cells are atomized via local domains that contain the minimal,
non-trivial amount of information about the lattice $k$-skeleton.  We
build these hybrid cells from local constructions and discuss how they
are representative of the measure of the discrete forms.

We first construct domains that are shared by a set of $k$-simplexes,
one $k$-simplex for each $k = 0, \ldots, d$.  These shared domains
become the ``atoms'' of our geometry in the sense that these are the
simplest meaningful $d$-volumes in a PF manifold.  Of course one could
always subdivide these irreducible domains in some arbitrary way, even
to go so far as to define infinitesimal domains.  However, doing so
yields no further discrete information about the lattice or the
differential forms on the lattice.
\begin{defines}      %   [Irreducible Hybrid Cell]
\begin{subequations}
  An irreducible hybrid cell, $\irrhyb{\polys{0}\polys{1}\cdots
    \polys{d}}$, in a PF $d$-manifold $\PFtri$ is the $d$-simplex
$$
\text{\rm sgn}_{0\cdots d}\left([\circum{0}, \circum{1}, \ldots, \circum{d}]\right) = 
  \frac{1}{d!}\epsilon_{i_1 \cdots i_d} e^{i_1} \wedge \cdots \wedge e^{i_d},
$$
given that $\polys{k}\in \polys{k+1}$ for every $k<d$, $\epsilon_{i_1
  i_2 \cdots i_d}$ is the orientation of $\polys{d}$, and the vectors
$e_{i}$ are the vectors emanating from $\circum{0}$.  When a
circumcenter lies outside $\PFtri$, we only take the domain of
$\irrhyb{\polys{0}\polys{1}\cdots \polys{d}}$ that lies in $\PFtri$.
\end{subequations}
\end{defines}

Here a $k$-simplex, or convex hull of $k+1$ points, is denoted
$[a_{0}, a_{1}, \cdots, a_{k}]$ and $\circum{k}$ is the circumcenter
of the simplicial element $\polys{k}$.  The factor
$\text{sgn}_{0\cdots d}$ ensures the volume is consistent with the
induced orientation from $\polys{d}$, i.e. if the orientation of
$[\circum{0},\circum{1}, \ldots, \circum{d}]$ is opposite to that
induced in $\polys{d}$ then the orientation is flipped.  This is
matched by the totally-antisymmetric tensor $\epsilon_{i_1 i_2 \cdots
  i_d}$ whose orientation is induced by $\polys{d}$.  In cases where
the circumcenter of an element lies outside the element, the volume
gives negative contribution to sums over the irreducible hybrid
cell. The $2$-form still contains a vector with negative
$1$-dimensional orientation even after being made compatible with the
containing volume's orientation.  In general, the circumcenter lying
outside the simplicial element leads to an over-counting of volume.
This careful accounting of orientation ensures the conservation of
total volume, e.g. over the simplicial element $\polys{d}$.

It is clear from the definition that the $0$-skeleton of an irreducible
hybrid domain consists of the circumcenters of each of the
$k$-simplexes that share the domain.  In the $0$-skeleton only the
vertexes $\circum{0} = \polys{0}$ and $\circum{d} = \polyd{0}$ are
vertexes that are also members of  the simplicial or dual
skeletons.  Similarly, the $1$-skeleton of
$\irrhyb{\polys{0}\polys{1}\cdots \polys{d}}$ consists of $6$ vectors,
two of which are subspaces of either the simplicial or dual
$1$-skeleton of $\PFtri$. \refFig{Fig:FH} shows an irreducible hybrid
cell in three dimensions and highlights the members of the $1$-skeleton
that lie in either of the $1$-skeletons of $\PFtri$.
\begin{center}
\begin{figure}[t]
\includegraphics[width=2.5in]{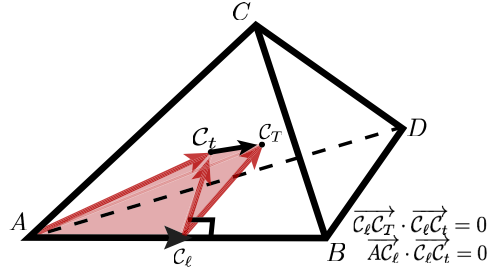}
\caption{An irreducible domain in three dimensions.  The domain (red,
  shaded) is that domain on which a $0$-form on $A$, a $1$-form on
  $[AB]= \ls$, a $2$-form on $[ABC]= \tris$, and a $3$-form on
  $[ABCD]=\tets$ are mutually defined.  The black edges bounding the
  irreducible domain are edges that lie in either the simplicial or
  dual $1$-skeleton of $\PFtri$.  The points ${\cal C}_{\ls}$, ${\cal
    C}_{\tris}$, and ${\cal C}_{\tets}$ label the circumcenters of
  $\ls$, $\tris$, and $\tets$ respectively.  Moreover, there is a
  natural orthogonal basis in this domain given by the edges
  $\protect\overrightarrow{\circum{\vs} \circum{\ls}}$,
  $\protect\overrightarrow{\circum{\ls} \circum{\tris}}$,
  $\protect\overrightarrow{\circum{\tris} \circum{\tets}}$.  This
  irreducible domain is foundation of the DEC as applied to the
  geometry of PF manifolds.  }  \label{Fig:FH}
\end{figure}
\end{center}

In each irreducible hybrid cell we can form an orthogonal (or
orthonormal) basis from the set of vectors $\{ \Solder{i}\ |
\Solder{i} = \overrightarrow{\circum{i} \circum{i+1}}\}$.  Moreover,
since each of these cells are subspaces of $\mathds{R}^{d}$, we have
the volume element given by $\tilde{V} = \frac{1}{d!} \epsilon_{i_{1}
  \cdots i_{d}} m^{i_{1}} \wedge \cdots \wedge m^{i_{d}}$.  In this
basis of differential forms, we carry the information about the
discrete $1$-forms $\ls \propto \Solder{0}$ and $\ld \propto
\Solder{d-1}$.  All other edges in the $0$-skeleton of this cell are
virtual in the sense that these $1$-forms only become basis elements
of higher-dimensional $k$-forms while not carrying direct information
about the discretization of $1$-forms on the lattice.  Similarly, the
irreducible hybrid cells contain discretization content for only one
simplicial and one dual $k$-form, for all $k$.  For simplicial
$k$-forms, this is clear from the definition.  For dual $k$-forms it
is evident from the restriction of the cell to exactly one
$(d-k)$-simplicial element.  The rest of the ${d+1 \choose k+1}$
$k$-forms in the irreducible hybrid cell carry the information about
the graded algebra in the simplex $\polys{d}$, but only indirectly
given the flat interior of the simplex.

The irreducible hybrid domains give the smallest domain of support for
any given $k$-form in the dual lattices of $\PFtri$.  From these
irreducible domains we wish to reconstitute local ``natural volumes''
associated to each lattice element of $\PFtri$.  The irreducible
hybrid cells do tile the PF manifold $\PFtri$ but will not generally
provide a disjoint cover of $\PFtri$.  Only when the triangulation is
well-centered (i.e. when the circumcenter of each simplicial element
is contained in the simplicial element) does the set of irreducible
hybrid cells form a disjoint cover.  We can now state a result with
regard to the measure of the hybrid cell.

\begin{thm}
  The convex hull (interior to $\PFtri$) of a simplicial element and
  its dual, $\text{CH}(\polys{k}, \star\polys{k})\cap \PFtri$, defines
  the domain of support for a discrete form on $\polys{k}$
  ($\star\polys{k}$) which is given by the set theoretic union of the
  irreducible hybrid cells (contained in $\PFtri$).  The volume
  measure of discrete forms on $\polys{k}$ ($\polys{k}$) is given by
  $V_{\polys{k}} = \frac{1}{{d\choose k}} |\polys{k}| \
  |\star\polys{k}|$.
\end{thm}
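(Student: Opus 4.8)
The plan is to reduce both assertions to the irreducible hybrid cells and to exploit the orthogonality of the circumcentric frame. First I would fix $\polys{k}$ and note that an irreducible cell $\irrhyb{\polys{0}\cdots\polys{d}}$ has its defining flag passing through $\polys{k}$ precisely when $\polys{0}\subset\cdots\subset\polys{k}$ is a maximal flag of faces of $\polys{k}$ and $\polys{k}\subset\polys{k+1}\subset\cdots\subset\polys{d}$ is a maximal flag of cofaces. Since a convex hull contains the convex hull of any sub-collection of its generating points, and since (for a well-centered complex) $\circum{0},\ldots,\circum{k}$ lie in $\polys{k}$ while $\circum{k},\ldots,\circum{d}$ lie in $\star\polys{k}$, every such cell lies in $\text{CH}(\polys{k},\star\polys{k})\cap\PFtri$. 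For the reverse inclusion I would use the join description $\text{CH}(\polys{k},\star\polys{k})=\{(1-t)a+tb:\ a\in\polys{k},\ b\in\star\polys{k},\ t\in[0,1]\}$ together with the fact that the circumcentric sub-simplices $[\circum{0}\cdots\circum{k}]$ tile $\polys{k}$ and $[\circum{k}\cdots\circum{d}]$ tile $\star\polys{k}$; the join distributes over these tilings, exhibiting $\text{CH}(\polys{k},\star\polys{k})$ as the union of the irreducible cells and establishing the support claim.

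For the volume I would evaluate one cell and sum. Using the orthogonal frame $\{\Solder{i}=\overrightarrow{\circum{i}\circum{i+1}}\}$ supplied by the circumcentric dual, the cell $[\circum{0},\ldots,\circum{d}]$ is a path orthoscheme whose edge vectors out of $\circum{0}$ are partial sums of the $\Solder{i}$; the change of basis to the orthogonal $\Solder{i}$ is unipotent, so $\norm{\irrhyb{\polys{0}\cdots\polys{d}}}=\frac{1}{d!}\prod_{i=0}^{d-1}\norm{\Solder{i}}$. Splitting the product at $i=k$ and inserting $\frac{k!(d-k)!}{d!}$, this equals $\binom{d}{k}^{-1}\norm{[\circum{0}\cdots\circum{k}]}\,\norm{[\circum{k}\cdots\circum{d}]}$, the two factors being the $k$- and $(d-k)$-volumes of the lower and upper circumcentric sub-simplices. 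Summing over all flags through $\polys{k}$ factors as a sum over lower flags times a sum over upper flags, and because these sub-simplices tile $\polys{k}$ and $\star\polys{k}$ the two sums collapse to $\norm{\polys{k}}$ and $\norm{\star\polys{k}}$, giving $V_{\polys{k}}=\binom{d}{k}^{-1}\norm{\polys{k}}\,\norm{\star\polys{k}}$.

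The two steps I would treat most carefully are the orthogonality of the frame and the tiling in the non-well-centered case. Orthogonality $\Solder{i}\perp\Solder{j}$ follows from property~(1) of the circumcentric dual: $\overrightarrow{\circum{i}\circum{i+1}}$ lies along the perpendicular-bisector structure separating $\polys{i}$ from $\polys{i+1}$ and is orthogonal to the affine span of $\polys{i}$, hence to every earlier $\Solder{j}$. The tiling claim is the main obstacle. When the complex is well-centered these sub-simplices form an honest disjoint cover and all the volumes above are ordinary; when a circumcenter falls outside its simplex the cover is no longer disjoint and some cells pick up negative orientation. I would handle this exactly through the orientation convention $\text{sgn}_{0\cdots d}$ of the Definition, reading $\norm{\cdot}$ as a signed measure, so that the decompositions $\sum|[\circum{0}\cdots\circum{k}]|=\norm{\polys{k}}$ and $\sum|[\circum{k}\cdots\circum{d}]|=\norm{\star\polys{k}}$ persist as signed identities and the formula holds verbatim. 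What remains is then only the flag count and the unipotent determinant evaluation.
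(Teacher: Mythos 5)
Your proof is correct, and it differs from the paper's in both halves of the argument. For the support claim the paper proves convexity of the union of irreducible cells directly, by checking that the segment joining two extremal circumcenters $\circum{p}$ and $\circum{p}'$ stays inside the union (via the acute-angle property of the circumcentric frame); you instead argue by double inclusion, with the reverse inclusion coming from the join description $\text{CH}(\polys{k},\star\polys{k})=\{(1-t)a+tb\}$ distributing over the circumcentric tilings of $\polys{k}$ and $\star\polys{k}$. Both work; yours avoids the angle estimates at the cost of invoking the tiling property twice. For the volume, the paper first collapses the oriented sum of irreducible cells into a single generalized bipyramid with base $\star\polys{k}$ and $k$-dimensional altitude $\polys{k}$ and then simply quotes the join volume $\frac{k!(d-k)!}{d!}\,|\polys{k}|\,|\star\polys{k}|$; you compute each irreducible cell as a path orthoscheme, $\frac{1}{d!}\prod_i\norm{\Solder{i}}$ via the unipotent change of basis to the orthogonal frame, split the product at $i=k$, and factor the flag sum into a lower sum tiling $\polys{k}$ times an upper sum tiling $\star\polys{k}$. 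This cell-by-cell route is more explicit and in effect supplies a proof of the bipyramid volume formula that the paper asserts without derivation; your treatment of the non-well-centered case as a signed identity is consistent with the paper's $\text{sgn}_{0\cdots d}$ convention and its remark that oppositely oriented overlaps cancel. The one ingredient you rightly single out, the orthogonality $\Solder{i}\perp\Solder{j}$ for $j<i$, does follow as you say from the fact that $\circum{i+1}$ projects onto the affine span of $\polys{i}$ at $\circum{i}$, so no gap remains.
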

\begin{proof}
\begin{subequations}
  For each irreducible hybrid cell containing a given $\polys{k}$,
  $\irrhyb{\cdots \polys{k}\cdots}$, any $k$-form $\omega$ will
  generally have a non-zero evaluation over $\polys{k}$ and hence 
  have a non-zero component in each irreducible cell containing
  $\polys{k}$.  We can check that the set union of $\irrhyb{\cdots
    \polys{k} \cdots}$ is convex by examining the convex sum of
  extremal points on the $\irrhyb{\cdots \polys{k} \cdots}$.  First
  we examine the sum over $\polys{p}$ for $p<k$, which gives
$$
\bigcup_{\polys{0}, \ldots, \polys{k-1}} \left[ \circum{0}, \ldots \circum{k-1}, 
\circum{k}, \ldots, \circum{d}\right]\cap \PFtri  = [ \underbrace{\vs_{0}, \vs_{1}, \ldots, 
\vs_{k}}_{\polys{k}}, \circum{k+1}, \ldots, \circum{d}]\cap \PFtri,
$$
which is clearly convex.  Consider first the convex sum of
$\circum{p}$ and $\circum{p}'$ for two $\polys{p}$s ($p>k$) for which
the irreducible hybrid cell contain $\polys{k}$ and $\polys{p}$ is
non-zero.  We can examine the two irreducible hybrid cells who only
differ by $\polys{p}$.  There exists a boundary between the two cells
$[ \vs_{0}, \ldots,\vs_{k},\circum{k+1}, \ldots, \circum{p},\dots,
\circum{d}]$ and $[\vs_{0}, \ldots,\vs_{k},\circum{k+1}, \ldots,
\circum{p}',\dots, \circum{d}]$.  If $p=d$ then the boundary is a
subspace of the common $\polys{d-1}$.  In this case, the convex sum of
the two $\circum{d}$ and $\circum{d}'$ is the dual edge $\ld$, which is
a straight-line entirely contained on the combined domain (when the
two $d$-simplexes are mapped onto $\mathds{R}^{d}$). If $p<d$ then we
know from the circumcentric duality that the convex sum
$$\overrightarrow{\circum{p}\circum{p}'} =  \rho \circum{p} + 
(1-\rho) \circum{p}' \ \ \ \rho \in [0, 1]
$$
forms angles $\angle\overrightarrow{\circum{p-1}\circum{p}},
\overrightarrow{\circum{p}\circum{p}'} < \frac{\pi}{2}$ and
$\angle\overrightarrow{\circum{p}\circum{p}'},
\overrightarrow{\circum{p+1}\circum{p}} < \frac{\pi}{2}$ and hence
lies within both the irreducible hybrid cells.  We therefore conclude
that the set union of these irreducible cells for a given lattice
element is $\text{CH}(\polys{k}, \star\polys{k})$.

However, this entire domain may not contribute to the final oriented
sum over the $\irrhyb{\cdots \polys{k} \cdots}$s.  Two irreducible
hybrid cells covering the same domain with opposite orientations will
give zero contribution from the overlapping volume.  When summing over
the $\polys{p}$ for $p<k$, we simply get back a simplex
$$
\sum_{p<k} [\circum{0}, \ldots, \circum{k}, \ldots, \circum{d}] =
 [\underbrace{ \vs_{0}, \ldots, v_{k}}_{\polys{k}}, \circum{k+1}, \ldots, \circum{d}.
$$
Meanwhile  summing over the $\polys{p}$ for $p>k$ returns
$$
\sum_{p>k} [\circum{0}, \ldots, \circum{k}, \ldots, \circum{d}] = [\circum{0}, \ldots, \circum{k}, \{\polyd{d-k}\} ].
$$
Using these two results, the full sum gives  a bipyramid with base $\polyd{d-k}$ and $k$-dimensional altitude given by $\polys{k}$ whose volume is $V_{k} = \frac{1}{{d \choose k}} |\polys{k}|  \ |\star\polys{k}|$. 
\end{subequations}
\end{proof}

The hybrid domains associated to lattice elements of $\PFtri$ define
the local measure for the discrete forms on the lattice.  This domain
defined by the measure may not actually encompass the lattice element,
especially when the lattice is non-Pittway (when the element and its
dual have empty intersection).  However, the domain of support is
generally more expansive and necessarily contains the lattice element.
It is particularly insightful to notice that for a simplicial element
and its dual the domains of support and local measures for a discrete
form $\omega$ and its dual $\star \omega$ coincide.  These
lattice-element hybrid measures and the convex domain of support then
become fundamental to the discretization and algebra of discrete
forms.  Henceforth, when we think of the hybrid cell, we will take
this to mean the local lattice measure and not the domain of compact
support, since it is only the former that is necessary for explicit
computations. Examples of the menagerie of the local measure hybrid
cells in $3$ dimensions is shown in \refFig{fig:hybcell}.
\begin{center}
\begin{figure}[h]
\includegraphics[width=3in]{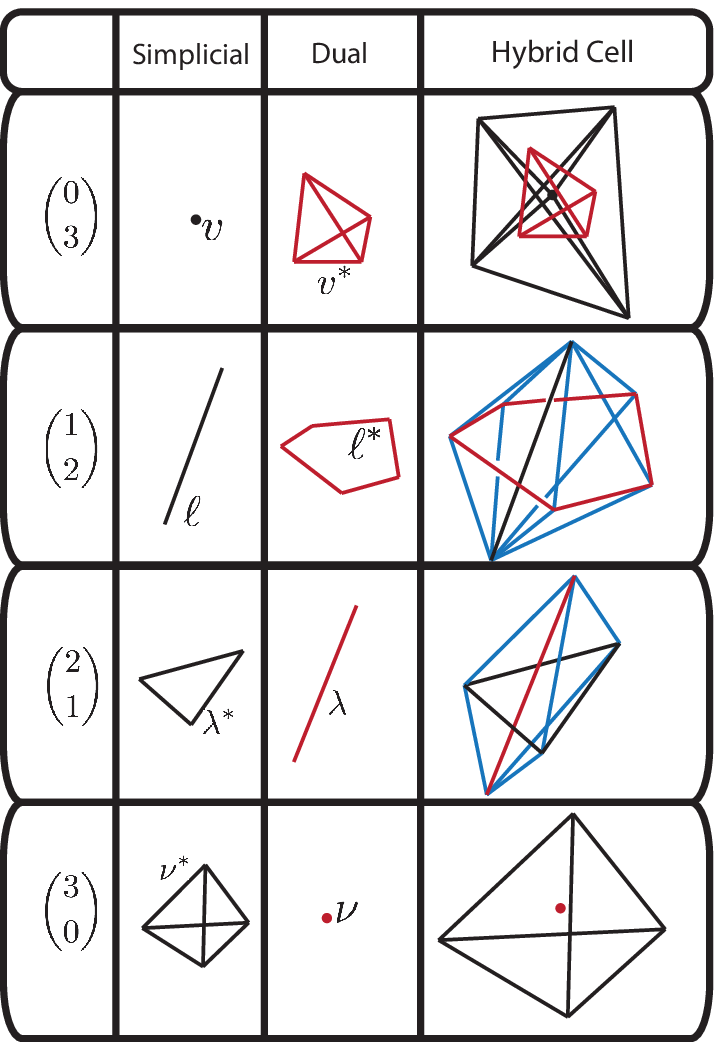}
\caption{In three dimensions there are four distinct classes of hybrid
  cells in $\PFtri$.  We show in this figure representative depictions
  of these four classes.  The labeling ${ k \choose (d-k)}$ labels
  the dimension of a $k$ simplicial element$\polys{k}$ and its dual.
  The hybrid cell is heuristically constructed by connecting the
  vertexes of $\polys{k}$ with the vertexes of the dual cell
  $\star\polys{k}$.  In the cases of ${0 \choose d}$ and ${d \choose
    0}$, the hybrid cell is just equal to $\polys{d}$ or $\polyd{d}$,
  respectively.  }\label{fig:hybcell}
\end{figure}
\end{center}

We have built up the lattice element hybrid cells from irreducible
domains. We now take a step in the reverse direction to examine hybrid
cells common to multiple lattice elements, not just a given lattice
element.  Such constructions are very useful when we are required to
relate discrete forms on a $k$-skeleton to discrete forms from a
$p$-skeleton ($p\neq k$).  For example, the exterior derivative
$d\omega$ is a map from $k$-forms to $(k+1)$-forms and requires a
relationship to be formed between hybrid cells that overlap but do not
coincide.  The integration over the hybrid domain to $\polys{k+1}$
picks up contributions from each of the $\polys{k}\in \polys{k+1}$,
but they do not contribute equally.  Rather the ``democratic''
allotment of domains by the hybrid cells for each $\polys{k}$ defines
a domain of support for each of the $\polys{k}$.  The integration over
the hybrid domain for the $\polys{k+1}$ then acts as a restriction on
the integration for the $\polys{k}$ and we find a domain common to
$\polys{k}$ and $\polys{k+1}$ for each term in the discrete $d\omega$.

\begin{cor}
\begin{subequations}
  The hybrid cell common to two simplicial elements $s$ and $s'$
  (with $\text{dim}(s)<\text{dim}(s')$) is given by sum of oriented volumes
\begin{align}\label{eq:CorHyb}
V_{s s'} =  &\sum_{\polys{k} \neq s,s'}   \irrhyb{\polys{0} \cdots s \cdots {s'} \cdots 
\polys{d}} \nonumber  \\
&\ +\sum_{\polys{k}|_{s}}\sum_{\polys{m}|_{s'}} \irrhyb{\polys{0} \cdots s \cdots \polys{d}} 
\cap  \irrhyb{\polys{0} \cdots s' \cdots \polys{d}} .
\end{align}
  When the simplicial lattice is well-centered (i.e. $c(\polys{d}) \in
  \polys{d}, \ \forall \polys{d}$), then the hybrid cell common to a
  set of simplicial elements ${\cal S} =
  \polys{i_1}_{1},\polys{i_2}_{2}, \ldots, \polys{i_{n}}_{n}$ is given
  by the sum;
\begin{align} 
V_{1, \ldots, n}  = \sum_{\polys{k} \notin{\cal S}}   V_{\polys{0} \cdots \polys{i_{1}} \cdots \polys{i_n} \cdots \polys{d}}.
\end{align}
\end{subequations}
\end{cor}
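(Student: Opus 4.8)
The plan is to read $V_{ss'}$ as the intersection of the two single-element measures supplied by the Theorem, i.e. the subdomain on which discrete forms carried by $s$ and by $s'$ are simultaneously supported. The Theorem gives each factor as an oriented union of irreducible cells, $V_{s} = \bigcup \irrhyb{\polys{0}\cdots s \cdots \polys{d}}$ taken over all choices of the remaining $\polys{p}$ ($p\neq\text{dim}(s)$) that complete a flag through $s$, and likewise for $s'$. First I would distribute the intersection over these unions, writing $V_{s}\cap V_{s'} = \bigcup_{F,F'}\left(\irrhyb{\cdots s \cdots}\cap\irrhyb{\cdots s'\cdots}\right)$, where $F$ runs over flags fixing slot $\text{dim}(s)$ to $s$ and $F'$ over flags fixing slot $\text{dim}(s')$ to $s'$. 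The two sums of \refEQ{eq:CorHyb} are then exactly the two types of term in this distributed union.

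The case analysis separates them cleanly. A single flag can fix slot $\text{dim}(s)$ to $s$ and slot $\text{dim}(s')>\text{dim}(s)$ to $s'$ only when $s$ is a face of $s'$, since a flag is totally ordered by inclusion; when $F=F'$ is such a flag the intersection is the whole irreducible cell, and summing over all remaining slots $\polys{k}\neq s,s'$ yields the first sum. Every other pair contributes only its geometric overlap $\irrhyb{\cdots s \cdots}\cap\irrhyb{\cdots s'\cdots}$, which is the second, doubly-indexed sum with $\polys{k}|_{s}$ and $\polys{m}|_{s'}$ ranging over the cofaces that complete the two divergent flags; this term is present whenever $s\not\subset s'$, and also when $s\subset s'$ but the two flags disagree at an interior slot. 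The key technical point here is tracking orientation through each $\irrhyb{\cdots}\cap\irrhyb{\cdots}$ so that regions carried with opposite sign cancel, exactly as in the Theorem's reduction of the oriented union to a bipyramid; the convexity and the circumcentric angle bounds $\angle < \frac{\pi}{2}$ established there guarantee each pairwise intersection is a genuine convex piece rather than a spurious sliver.

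For the well-centered statement I would show that well-centeredness collapses the second sum: since each $\circum{k}$ lies inside $\polys{k}$, the irreducible cells form a disjoint cover, so $\irrhyb{\cdots s \cdots}\cap\irrhyb{\cdots s'\cdots}$ has positive measure only when the two cells coincide. Only flag contributions survive, which forces the surviving common cells to come from nested elements, and I would then extend to the $n$-element set ${\cal S}$ by induction: intersecting $V_{1,\ldots,n-1}$ with $V_{\polys{i_n}_{n}}$ and reapplying the disjoint-cover argument keeps precisely the irreducible cells whose slots $i_{1},\ldots,i_{n}$ are occupied by the prescribed elements, with the free slots $\polys{k}\notin{\cal S}$ summed over, giving $V_{1,\ldots,n}=\sum_{\polys{k}\notin{\cal S}}V_{\polys{0}\cdots\polys{i_1}\cdots\polys{i_n}\cdots\polys{d}}$. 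The hard part will be the non-well-centered second sum: verifying that the oriented intersections of flags diverging at an interior slot reassemble into the claimed common volume with no residual over- or under-counting, which again rests on the circumcenter-outside-the-simplex sign bookkeeping fixed in the Definition.
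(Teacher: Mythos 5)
Your proposal follows essentially the same route as the paper: define $V_{ss'}$ as the intersection of the two hybrid cells from the Theorem, distribute that intersection over the constituent irreducible cells, recover the first sum from irreducible cells containing both $s$ and $s'$ and the second sum from the oriented overlaps of non-shared cells, and observe that well-centeredness makes the irreducible cells disjoint so the second sum vanishes. Your added remarks (that a single flag through both elements forces $s\subset s'$, and the explicit induction for the $n$-element case) are correct elaborations of steps the paper leaves implicit.
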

\begin{proof}
  The hybrid domain common to any two simplicial elements is obtained
  by the intersection of the hybrid cells for each element $s$ and
  $s'$.  This reduces the problem to pairwise set intersections over
  irreducible hybrid domains.  For an irreducible hybrid cell
  containing both $s$ and $s'$, the intersection returns the full
  irreducible cell.  Any irreducible hybrid cell for $s$ that is not
  also a hybrid cell for $s'$ will yield the subspace common to both
  irreducible cells, but with an orientation opposite of the
  simplicial complex.  These contribute with negative volume and
  remove subspaces not in any shared $\polys{d} \ni s, s'$.  Hence,
  the hybrid cell common to $s$ and $s'$ is reduced to a sum over 
  irreducible hybrid cells plus negative volume terms obtained by
  intersections of non-shared irreducible cells.  The extension to $n$
  simplicial elements is straightforward, though quickly becomes
  cumbersome.  In the case of well-centered simplicial complexes, all
  irreducible hybrid cells are disjoint and trivially factorize any
  given $\polys{d}$.  Hence, the second summation \refEQ{eq:CorHyb}
  vanishes.
\end{proof}

The above definitions have only focused on hybrid cells for simplicial
lattice elements.  As we noted before for the hybrid cell for a given
lattice element, the hybrid domain for $\polys{k}$ is coincident with
the hybrid domain for $\star\polys{k}$ as a result of the
self-adjointness of the Hodge dual in the $L^{2}$-inner
product. Generalization from simplicial elements for the reduced
hybrid cells or domains common to multiple lattice elements is,
therefore, trivial by simply taking the dual of an element in the dual
lattice.

\subsection{Solder Forms and Moment Arms}\label{subsec:solder}

We consider a frame bundle on our manifold.  In each simplex we have a
fibre that is a copy of the tangent space on the base manifold, which
we call the space of values. The tangent
space of the base manifold is the horizontal section while the fibre
or space of values is the vertical section.  On $\PFtri$ we assign a
tangent space to any $d$-simplex or any pair of neighboring
$d$-simplexes.  For each of these tangent spaces, we have a copy in
the space of values.

The PF manifold has a hard-wired simplicial skeleton and from that we
construct a dual skeleton, entirely determined by its rigid predecessor
lattice.  We can think of these two lattices as complements of one
another much the way we consider the space of vectors and one-forms as
complements.  Moreover, since the dual lattice is decomposed into
subspaces of $d$-simplexes, we can define the dual lattice entirely in
terms of the vectors in the tangent spaces. Thus, determining
transformations between the simplicial and dual lattices is tantamount
to determining an appropriate solder form in the $d$-simplexes, or
more appropriately in the irreducible hybrid cells.  A solder form,
the unit vector-valued one-form, is the identity map from the tangent
space--with one-form basis $\TSForm{a}$--to the space of values--with
vector basis $\sovVec{a}$;
\begin{equation}
d{\cal P} = \sovVec{a}\TSForm{a}.
\end{equation} 
When acting on a vector $d{\cal P}$ transforms a vector in the tangent
space to a vector in the space of values.  The summation ensures that
we retain the basis expansion but in the new vector space.  We can now
construct a representation of the solder form in $\PFtri$ that allows
for transformations between the simplicial and dual lattices.

In the discrete manifold, the solder form is an object that carries
information about the relationship between the space of values
(vertical section) and the tangent space (horizontal section).  In
conjunction with the Hodge dual, the solder form acts as a transform
between $k$-forms, $\omega$, and $p$-forms ($p\leq d-k$) orthogonal to
$\omega$.  Through wedge products we can form a $(d-p)$-dimensional
space orthogonal to a desired $p$-form that contains our $k$-form.
The Hodge dual and a summation over discrete $k$-forms that are able
to construct such a space then provide a transformation between a set
of $k$-forms and the desired $p$-form.  Another route is to take the
Hodge dual of $k$-forms orthogonal to a desired $p$-form and contract
over the directions orthogonal to the $p$-form.  Together these paths
form two routes towards a notion of the double-dual of a vector-space
valued differential (discrete) form.

We start the construction in an irreducible hybrid cell.  In this
irreducible cell we have exactly one $\ls$ and $\ld$ that are
representative edges of the simplicial and dual 1-skeletons,
respectively. As discussed above, this domain has volume form given by
\begin{equation}
\irrhyb{\polys{0}, \ls, \cdots, \ld, \polys{d}} = \frac{1}{d(d-1)} 
\left(\ls \wedge \MomArm{\ls}{\ld}\wedge \ld\right),
\end{equation}
where the $\MomArm{\ls}{\ld} = \frac{1}{(d-2)!} \Solder{1} \wedge
\cdots \wedge \Solder{d-2}$ is the $(d-2)$-dimensional subspace
orthogonal to both $\ls$ and $\ld$, which we call the moment arm from
$\ls$ to $\ld$.  Both $\ls$ and $\ld$ are understood to consist of
only the segment of $\ls$ and $\ld$ within a given irreducible hybrid
cell.  Taking the set $\{\ls, \ld, \{\Solder{i}\}_{i=1}^{d-2}\}$ as an
orthonormal basis (where $\{\Solder{i}\}$ are the
one-forms that span $\MomArm{\ls}{\ld}$), we have the
relationship between the lattice forms and their induced vectors;
\begin{subequations}
\begin{align}
&\ls(\ls) = 1, \ \ \ \ \ \ \ \ \ \  \ \ \ \    \ls(\ld) = 0, \ \ \ \ \ \ \  \ \ 
\ls(\Solder{i}) = 0\\
&\ld(\ld) = 1, \ \ \ \ \ \ \ \ \ \  \ \ \  \ld(\ls) = 0, \ \ \ \ \ \ \ \    
\ld(\Solder{i}) = 0\\
&\Solder{i}(\Solder{i}) = 1, \ \ \ \ \ \ \ \ \ \  
\Solder{i}(\ls) = 0, \ \ \ \ \ \  \Solder{i}(\ld) = 0,
\end{align}
\end{subequations}
where $\MomArm{\ls}{\ld}$ represents any $1$-form or vector in the
subspace defined by $\MomArm{\ls}{\ld}$.  We can then identify
individual maps from each basis form to an identified basis element in
the vertical section (space of values).  In general, we make no real
distinction between the lattice element as a scalar-valued
differential form or as a vector-space valued differential form.  In the
context of this manuscript, the lattice elements always take the
meaning of a vector-valued differential form, or as a map from the
tangent space to the space of values.  The solder form is thus
\begin{equation}
d{\cal P}_{0} = \ls+\ld(\polys{d})+\sum_{m_{1}}^{m_{(d-2)}}
\Solder{i}(\polys{d}).
\end{equation}

The combination of the solder form and the Hodge dual provide us with
practical tools for transforming between the two lattices.  We first
examine the use of the solder form in such a transformation that is
well-known in general relativity.  In E. Cartan's \cite{Cartan}
approach to general relativity, the Einstein tensor need not be
defined with relation to the Ricci curvature and Ricci scalar, but as
the dual to the moment of rotation;
\begin{align}\label{eq:MoR}
{\bf G} & \equiv \star \left(  d{\cal P} \wedge {\cal R}\right) \nonumber \\
 & = \star\left(\frac{1}{4}\sovVec{\mu}\delta^{\mu}_{\ \nu} \TSForm{\nu} \wedge 
       \sovVec{\sigma}\wedge \sovVec{\tau} R^{\sigma\tau}_{\ \ \alpha\beta} \TSForm{\alpha} 
      \wedge \TSForm{\beta}\right)  \nonumber \\
& = \frac{1}{4} \sovVec{\xi} \tilde{\epsilon}^{\xi}_{\ \mu\sigma\tau}R^{\sigma\tau}_{\ \ \ \alpha\beta} 
    \TSForm{\mu}\wedge \TSForm{\alpha}\wedge \TSForm{\beta}\nonumber \\
& = \sovVec{\xi} \underbrace{ \frac{1}{4}\tilde{\epsilon}^{\xi}_{\ \mu\sigma\tau}R^{\sigma\tau}_{\ \ \ \alpha\beta}
    \epsilon_{\zeta}^{\ \mu\alpha\beta}}_{G^{\xi}_{\ \zeta} = (*{\cal R}*)^{\xi \rho}_{\ \ \zeta\rho}}d\Sigma^{\zeta},
\end{align}
where we have specifically worked in dimension $4$.  Here the solder
form plays the crucial role of defining a subspace orthogonal to both
the 3-volume for the moment of rotation and imposing the trace on the
$3$-form that results from the wedge product.  In higher dimensions,
we keep appending onto the Riemann curvature $(d-3)$ solder forms
prior to taking the Hodge dual to obtain the moment of rotation
$(d-1)$-form.  In this light, the solder forms take on the
interpretation as moment arms between the original basis elements and
elements in the orthogonal subspace.  This map provides a way to
identify mappings between the dual lattices not provided by the Hodge
dual alone.

The solder form is an instrumental tool in the transformation of
vector-space valued differential $k$-forms to differential $p$-forms.
Of particular interest to us now is the transformation of $1$-forms
$\omega\in \Lambda^{(1)}$ to $1$-forms in the dual space $\omega' \in
\Lambda^{*(1)}$, which is the raising (or lowering) operation on
differential forms. We take a lattice $1$-form $\ls$ and construct a
map to a dual one-form $\ld$.  This is equivalent to mapping the
$1$-form $\ls$ to the dual space of $1$-forms and asking for the
components along $\ld$.

To make such a map, we first make use of a discrete analog of a
continuum property relating the Hodge dual, the wedge product $\wedge$
and the inner derivative $\iota$.
\begin{thm}\label{thm:CD1}
  Let $\PFtri$ be a PF simplicial manifold and choose a simplicial
  $k$-element $\polys{k}\in \PFtri$, a one-form $\Solder{k}$ that
  extends from $\circum{k}$ to $\circum{k+1}$ of a $\polys{k+1}\ni
  \polys{k}$, and a circumcentric Hodge dual operator, $\star$, on
  $\PFtri$.  Then there exists a commutative diagram
\begin{equation}\label{eq:DD1}
\begin{CD}
\Lambda^{(k)} @>\wedge_{\Solder{i}}>> \Lambda^{(k+1)}\\
@VV\star V                                        @VV\star V\\
\Lambda^{*(d-k)}  @>  (d-1)\iota_{\Solder{i}}>> \Lambda^{*(d-k-1)},
\end{CD}
\end{equation}
using the wedge product with $\Solder{k}$ and the inner derivative
(contraction), $\iota_{\Solder{k}}$, over the vector,
$\overrightarrow{\Solder{k}}$, corresponding to $\Solder{k}$.
\end{thm}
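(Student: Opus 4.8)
The plan is to prove the identity locally, inside a single irreducible hybrid cell, and then promote it to all of \PFtri\ by invoking the locality of both $\star$ and of wedging/contracting with \Solder{k}. Within the cell $\irrhyb{\polys{0}\cdots \polys{d}}$ we already have, from \refSec{subsec:solder}, an orthonormal frame $\{\ls,\ld,\{\Solder{i}\}\}$ assembled from the solder one-forms $\Solder{i}=\overrightarrow{\circum{i}\circum{i+1}}$, in which \Solder{k}\ is one unit basis covector and $\overrightarrow{\Solder{k}}$ its metric-dual basis vector. First I would rewrite the circumcentric Hodge dual of \refEQ{eq:DisHodge} in this frame: because $\star$ carries the discrete $k$-form on \polys{k}\ to the dual $(d-k)$-form on $\star\polys{k}$ with the volume-ratio rescaling $|\star\polys{k}|/|\polys{k}|$, in the orthonormal solder basis it factors as the ordinary multilinear Hodge star on $\Lambda^{\bullet}(\mathds{R}^{d})$ composed with that scalar normalization.

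Next I would reduce the comparison of the two routes around \refEQ{eq:DD1} to the classical multilinear relation among wedge, Hodge star, and interior product. For a $k$-form $\omega$ and the vector $\overrightarrow{\Solder{k}}$ dual to \Solder{k}, one has $\star(\Solder{k}\wedge\omega)=(-1)^{k}\,\iota_{\overrightarrow{\Solder{k}}}(\star\omega)$ on $\Lambda^{\bullet}(\mathds{R}^{d})$; since \Solder{k}\ is a unit covector in the orthonormal solder frame no spurious metric factors intervene, and the sign is fixed by the induced orientation $\epsilon_{i_1\cdots i_d}$ of \polys{d}. This is precisely the commutativity asserted by the diagram, up to the scalar coefficient on the bottom arrow.

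The remaining and most delicate step is to explain the coefficient $(d-1)$ attached to $\iota_{\Solder{k}}$. It is absent from the bare multilinear identity and is forced instead by the mismatch between the hybrid-cell volume measures $V_{\polys{k}}=\tfrac{1}{\binom{d}{k}}|\polys{k}|\,|\star\polys{k}|$ and $V_{\polys{k+1}}=\tfrac{1}{\binom{d}{k+1}}|\polys{k+1}|\,|\star\polys{k+1}|$ that normalize the discrete forms in the source and target of the top row. I would evaluate the ratio of these two normalizations against the two volume-ratio factors introduced by the two copies of $\star$, writing everything in common terms through the irreducible-cell volume form $\irrhyb{\polys{0},\ls,\cdots,\ld,\polys{d}}=\tfrac{1}{d(d-1)}\left(\ls\wedge\MomArm{\ls}{\ld}\wedge\ld\right)$ together with the moment-arm decomposition $\MomArm{\ls}{\ld}=\tfrac{1}{(d-2)!}\Solder{1}\wedge\cdots\wedge\Solder{d-2}$. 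Collecting the binomial ratio $\binom{d}{k+1}/\binom{d}{k}$ against the two Hodge normalizations should leave exactly the overall factor $(d-1)$, closing the diagram.

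The main obstacle is this last bookkeeping: showing that the four independent scalar factors — the two circumcentric Hodge rescalings and the two hybrid-cell volume measures — conspire into a single clean, $k$-independent coefficient $(d-1)$ rather than a $k$-dependent constant. The multilinear identity itself is classical and presents no difficulty; the genuinely discrete content lies in verifying that the circumcentric construction, defined through volume ratios rather than pointwise duality, reproduces that identity with the stated normalization.
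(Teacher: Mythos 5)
Your overall framing agrees with the paper's: the proof is local to an irreducible hybrid cell, uses the orthonormal solder frame $\{\Solder{i}\}$, and rests on the classical compatibility of $\wedge$, $\iota$, and $\star$ in $\mathds{R}^{d}$. The paper's proof is exactly a direct computation of the two routes for $k=1$: it expands $\star\ls$ as a sum of $(d-1)$-simplices $\frac{1}{(d-1)!}\Solder{1}\wedge\cdots\wedge\Solder{d}$ via the circumcentric-dual decomposition, applies $\iota_{\Solder{1}}$, and compares with $\star\left(\Solder{1}\wedge\ls\right)=\star\tris$ expanded as $(d-2)$-simplices $\frac{1}{(d-2)!}\Solder{2}\wedge\cdots\wedge\Solder{d-1}$. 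So up to this point you are on the paper's track.

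The gap is in the step you yourself flag as the obstacle: the origin of the coefficient $(d-1)$. You propose to extract it from the hybrid-cell volume normalizations $V_{\polys{k}}=\frac{1}{\binom{d}{k}}|\polys{k}|\,|\star\polys{k}|$ and the ratio $\binom{d}{k+1}/\binom{d}{k}$. Those measures play no role in the paper's argument, and the proposed ratio equals $(d-k)/(k+1)$, which is not $(d-1)$ and does not obviously combine with "Hodge rescalings" to yield it. In the paper the coefficient has a different and much more elementary source: each circumcentric dual $\star\polys{k}$ is a chain of $(d-k)$-simplices, each carrying the simplex-volume factorial $\frac{1}{(d-k)!}$. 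Contraction $\iota_{\Solder{k}}$ removes one edge vector from each such simplex but does not update the factorial, so the contracted expression carries $\frac{1}{(d-k)!}$ where the direct evaluation of $\star\left(\Solder{k}\wedge\polys{k}\right)$ produces correctly normalized $(d-k-1)$-simplices with $\frac{1}{(d-k-1)!}$; the discrepancy is the ratio of these factorials, namely $d-k$, which is $d-1$ in the $k=1$ case the paper actually computes. (This also exposes a point your proposal would need to confront head-on: carried out for general $k$, this bookkeeping yields a $k$-dependent factor $d-k$ rather than a uniform $(d-1)$, so the "straightforward" extension to arbitrary $k$ is not the clean constant you are trying to engineer.) To close your proof you would need to replace the binomial bookkeeping with this factorial comparison, or else demonstrate explicitly that your four scalar factors reproduce it.
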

\begin{proof}
\begin{subequations}
  We first prove this for the case $k=1$.   Using the definition of the circumcentric Hodge dual and its property that any subspace of a $p$-simplex, $\polys{p}$, gives back $\star\polys{p}$ yields 
\begin{align}
\star\left( \Solder{1} \wedge \ls\right)   &= 
   \sum_{\polys{p} \ni \ls, \tris:  \ p>2} \text{sgn} \left[ \circum{2}, \circum{3}, \ldots, \circum{d} \right] = \sum_{\Solder{i}: \  i>3} \frac{1}{(d-2)!} \Solder{2} \wedge \cdots \wedge \Solder{d-1},
\end{align}
where the summations are over the $\polys{k}$ (for $k>3$) containing $\ls$. 
Meanwhile, taking the Hodge dual first then the inner derivative results in
\begin{align}
\iota_{\Solder{1}} \left( \star \ls\right) & = \iota_{\Solder{1}} \left( \sum_{\Solder{j}: \ j>1}\frac{1}{(d-1)!}  \Solder{1} \wedge \cdots \wedge \Solder{d}\right) \nonumber \\
& =  \sum_{\Solder{j}: \ j>2}\frac{1}{(d-1)!}  \Solder{2} \wedge \cdots \wedge \Solder{d}
\end{align}
where we use the normalization property $\Solder{i}\left(
  \overrightarrow{\Solder{j}}\right) = \delta_{ij}$ that picks out a
given $\polys{2}$ from the summation.  Multiplying by $(d-1)$ yields
the desired result.

The case for arbitrary $k$ follows in a straightforward manner.
\end{subequations}
\end{proof}

This theorem shows that there exist two paths to map from an arbitrary
$k$-form on the simplicial lattice to a $(d-k-1)$-form on the dual
lattice. These two paths stem from continuum descriptions of the
double-dual transformations of tensors (as was used to construct the
Einstein tensor).  Since discrete forms are mapped as coefficients on
the lattice elements, we need only know how the space of forms on the
lattice elements get mapped to one another.  The scalar coefficient
gets carried through with no change.

As a result of Theorem~\ref{thm:CD1}, we can show a subsequent commutative diagram for maps from $\Lambda^{(1)}$ to $\Lambda^{* (1)}$.   We will drop combinatoric factors from the commutative diagrams for simplicity.      
\begin{cor}\label{cor:CD2}
On $\PFtri$ there exist maps from the simplicial $1$-skeleton to the dual $1$-skeleton such that
\begin{equation}\label{eq:DD2}
\begin{CD}
\Lambda^{(1)} @>\wedge^{(d-2)}>> \Lambda^{(d-1)}\\
@VV\star V                                        @VV\star V\\
\Lambda^{*(d-1)}  @>\iota^{(d-2)}>> \Lambda^{*(1)}.
\end{CD}
\end{equation}
\end{cor}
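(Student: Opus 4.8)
The plan is to prove Corollary~\ref{cor:CD2} by iterating the single-step commutative square of Theorem~\ref{thm:CD1} exactly $(d-2)$ times and pasting the resulting squares into one rectangle. First I would make explicit that the two horizontal maps in \refEQ{eq:DD2} are built from the moment-arm solder forms inside a single irreducible hybrid cell: the top edge $\wedge^{(d-2)}$ denotes the iterated wedge $\Solder{d-2}\wedge\cdots\wedge\Solder{1}\wedge(\cdot)$ sending a simplicial $1$-form to a $(d-1)$-form, while the bottom edge $\iota^{(d-2)}$ denotes the iterated inner derivative $\iota_{\Solder{d-2}}\cdots\iota_{\Solder{1}}(\cdot)$ sending a dual $(d-1)$-form to a dual $1$-form. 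Here $\{\Solder{1},\ldots,\Solder{d-2}\}$ is precisely the orthonormal set of one-forms spanning $\MomArm{\ls}{\ld}$, with each $\Solder{k}$ the edge from $\circum{k}$ to $\circum{k+1}$, exactly the solder form appearing in the hypothesis of Theorem~\ref{thm:CD1}. A degree count confirms the step counts match: $d-2$ wedges carry $\Lambda^{(1)}$ to $\Lambda^{(d-1)}$ and $d-2$ contractions carry $\Lambda^{*(d-1)}$ to $\Lambda^{*(1)}$.

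Next I would erect the tower of squares. For each level $k=1,\ldots,d-2$, Theorem~\ref{thm:CD1} supplies the commutative square whose top map is $\wedge_{\Solder{k}}\colon\Lambda^{(k)}\to\Lambda^{(k+1)}$, whose bottom map is $(d-1)\iota_{\Solder{k}}\colon\Lambda^{*(d-k)}\to\Lambda^{*(d-k-1)}$, and whose two vertical maps are the circumcentric Hodge dual $\star$. The decisive bookkeeping observation is that the right vertical arrow of the level-$k$ square, $\star\colon\Lambda^{(k+1)}\to\Lambda^{*(d-k-1)}$, is literally the left vertical arrow of the level-$(k+1)$ square. The squares therefore share vertical edges and compose side by side into a single rectangle whose two outer vertical arrows are again $\star$.

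The core step is the pasting lemma for commutative squares, realized here as an induction on $k$. For an input $\omega\in\Lambda^{(1)}$ write $\alpha_k=\Solder{k-1}\wedge\cdots\wedge\Solder{1}\wedge\omega\in\Lambda^{(k)}$; then the inductive hypothesis $\star\alpha_k=\iota_{\Solder{k-1}}\cdots\iota_{\Solder{1}}(\star\omega)$ combines with the level-$k$ instance $\star(\Solder{k}\wedge\alpha_k)=(d-1)\,\iota_{\Solder{k}}(\star\alpha_k)$ to advance the hypothesis to level $k+1$. Composing all top edges yields $\wedge_{\Solder{d-2}}\circ\cdots\circ\wedge_{\Solder{1}}=\wedge^{(d-2)}$ and all bottom edges yields $\iota_{\Solder{d-2}}\circ\cdots\circ\iota_{\Solder{1}}=\iota^{(d-2)}$, so commutativity of each constituent square forces commutativity of the whole. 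The accumulated coefficient $(d-1)^{d-2}$ is exactly the combinatoric factor we have agreed to suppress, so after dividing it out the two paths of \refEQ{eq:DD2} agree.

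The main obstacle is not the pasting but verifying that each intermediate object is a legitimate input to the next instance of Theorem~\ref{thm:CD1}. I must check that the partial wedge $\alpha_k$ is a genuinely nondegenerate $k$-form supported in the hybrid cell, so that the ``subspace of a simplex returns its dual'' property of the circumcentric $\star$ applies at that intermediate degree, and that the contractions are applied in the order dual-matched to the wedges, so that $\Solder{k}$ on the simplicial side is contracted against $\Solder{k}$ on the dual side at the same level. Both follow because $\{\ls,\ld,\Solder{1},\ldots,\Solder{d-2}\}$ is a mutually orthogonal basis of the irreducible hybrid cell: orthogonality guarantees that every partial wedge is nondegenerate, and it forces the normalization $\Solder{i}(\overrightarrow{\Solder{j}})=\delta_{ij}$, which is exactly what makes each contraction pick out the matching factor. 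With these checks in place the induction closes and the diagram commutes.
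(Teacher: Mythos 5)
Your proposal is correct and follows essentially the same route as the paper: the paper's proof is precisely an induction that pastes successive instances of the commutative square from Theorem~\ref{thm:CD1} side by side, exactly as you describe. Your additional bookkeeping (matching the shared vertical $\star$ arrows, tracking the suppressed $(d-1)^{d-2}$ factor, and verifying nondegeneracy of the intermediate wedges via orthogonality of $\{\ls,\ld,\Solder{1},\ldots,\Solder{d-2}\}$) only makes explicit what the paper leaves implicit.
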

\begin{proof}
  The proof follows by induction and successive applications of
  Theorem~\ref{thm:CD1} as in the diagram below:
\begin{equation}\label{eq:DD3}
\begin{CD}
\Lambda^{(1)}         @>\wedge_{m_{1}}>>      \Lambda^{(2)}                @>\wedge_{m_{2}}>>      \Lambda^{(3)}    \\
@V\star VV                                                  @VV\star V                               @VV\star V\\
\Lambda^{*(d-1)}     @>\iota_{m_{1}}>>          \Lambda^{*(d-2)}         @>\iota_{m_{2}}>>          \Lambda^{*(d-3)}.
\end{CD}
\end{equation}
Iteratively applying the above diagram, we finally construct the desired result. 
\end{proof}

The two paths in \refEQ{eq:DD2} define our two notions of the
double-dual of a vector-space valued differential form. The Down-right
path acts as the trace of the double dual to obtain a vector oriented
in a given direction, while the Right-down path builds a $(d-1)$
subspace orthogonal to the desired vector before taking the dual to
obtain the desired result.  The Right-down path corresponds to the
first line of \refEQ{eq:MoR} while the Down-right path corresponds to
the last line of \refEQ{eq:MoR}.

Taking the inner-derivative of the hybrid volume form $V_{\ls\ld}$
with a desired $\ld$ gives
\begin{equation}
 d\iota_{\ld} V_{\ls\ld} = \frac{1}{(d-1)} \ls\wedge \MomArm{\ls}{\ld},
\end{equation}
which defines the orthogonal subspace to $\ld$.  We find the moment
arm by taking the inner-derivative again,
\begin{equation}
d(d-1)\iota_{\ls}\iota_{\ld} V_{\ls\ld} = \MomArm{\ls}{\ld}.
\end{equation}
The inner-derivatives are used to find the components of the moment
arm that maps $\ls$ to $\ld$ (or the reverse).  Choosing another $\ls$
or another $\ld$ changes the moment arm.  Using our above commutative
diagrams, we then have
\begin{equation}\label{eq:DD5}
\begin{CD}
\Lambda^{(1)} @>\wedge^{\MomArm{\ls}{\ld}}>> \Lambda^{(d-1)}\\
@VV\star V                                        @VV\star V\\
\Lambda^{*(d-1)}  @>\iota^{\MomArm{\ls}{\ld}}>> \Lambda^{*(1)}.
\end{CD}
\end{equation}
Given that these moment arms play the same role as the solder forms in
mapping $k$-forms to $p$-forms, we will often refer to the moment arms
as generalized solder forms.  Just as the Einstein tensor can be
expressed as
$$ {\bf G} = \star \left( \underbrace{d{\cal P}\wedge \cdots \wedge d{\cal P}}_{d-3\text{-times}}
  \wedge {\cal R}\right) =\text{Tr}\left(*{\cal R}*\right),
$$ 
using the
solder forms to map the $2$-form to a $1$-form, the moment arms
$\MomArm{\ls}{\ld}$ allow us to define a map from the simplicial
$1$-skeleton to the dual $1$-skeleton on the dual lattice.  In the
more general case (\refFig{fig:solder}) of mapping a simplicial
$k$-form $s$ to a dual $p$-form $\sigma$ ($p<d-k$), we define the
moment arm $\MomArm{s}{\sigma}$
\begin{equation} \label{eq:SimpDualSolder} 
\MomArm{s \sigma} = {d \choose  k}
   {d-k \choose p} {\iota_{s}\iota_{\sigma}}V_{s\sigma} =  
  \frac{d!}{k!(d-k-p)! p!}   {\iota_{s}\iota_{\sigma}}V_{s\sigma}.
\end{equation}
This is the ${k \choose p}$-solder form between the simplicial and
dual lattices.  Of course, this solder form only makes sense when
$\sigma$ and $s$ have overlapping domains, i.e. when $s \in \star
\sigma$ or $\sigma \in \star s$.  Moreover, it is a natural
consequence that the solder form between a lattice element and its
dual is given by the point of intersection, and so we have a
consistent framework to map between the dual and simplicial lattice
elements.

We can also construct solder forms between simplicial lattice
elements.  Given the duality between $\sigma$ and a simplicial element
$\polys{d-p}$, \refEQ{eq:SimpDualSolder} also defines a moment arm
between two simplicial elements.  If we have two simplicial elements,
$s\in\{ \polys{k} \}$ and $s'\in \{\polys{p}\}$ then the moment arm or
solder form between $s$ and $s'$ is given by
\begin{equation} \label{eq:SimpSimpSolder}
%%%  k is the dimension of s''
\MomArm{s}{s'} = {d \choose k} {d-k \choose d-p}  {\iota_{s} \iota_{\star s'}} V_{\star s, s'}.
\end{equation}
\begin{center}
\begin{figure}[h]
\includegraphics[width=3.33in]{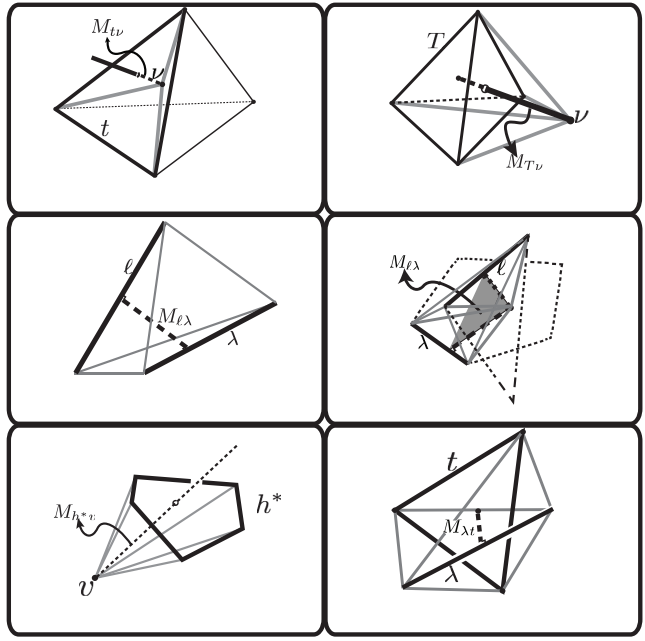}
\caption{Solder Forms are shown in three ({\em left}) and four ({\em right})
  dimensions.  When the dimensionality of the space spanned by two
  elements $\polys{k}$ and $\polyd{p}$ is $(d-1)$, a solder form
  between a $\polys{k}$ and a $\polyd{p}$ is given by the vector from
  $\circum{\polys{k}}$ to $\circum{\star\polyd{p}}$.  In more general
  cases, the generalized solder form between two elements is given by
  the $(d-p-k)$ subspace orthogonal to both $\polys{k}$ and
  $\polyd{p}$.  }\label{fig:solder}
\end{figure}
\end{center}

A similar result holds for solder forms between two dual lattice
elements.  This provides a geometric foundation for measures of
discrete forms.  The irreducible, reduced and standard hybrid cells
form a topological framework on which we form local measures for
lattice elements, while the moment arms/solder forms allow us freedom
to transform from one lattice to another.  This has yet to tell us
anything about the structure of discrete differential forms in this
context, and so we now shift our focus to the exterior calculus using
the hybrid domains.

\subsection{The Algebraic Structure of Forms on Hybrid Cells}\label{subsec:alghyrbid}
%%% The mixed wedge product (dual-simplicial) is a natural formation
%%% from the soldering forms, but the primitive wedge product
%%% (simplicial-simplicial or dual-dual) has unique properties that
%%% come from the dispersion of the distributions across the hybrid
%%% cells (inclusive of the boundary)

A  common property of exterior calculus on discrete
manifolds is the distributional nature of the discrete forms.  In
the canonical approach to DEC, we have discussed how the differential
forms are obtained by integration of the continuous differential form
over the simplicial element in $\TriMan$ corresponding to a simplicial
element in $\PFtri$.  These discrete differential forms  take on
values when evaluated on the simplicial or dual skeletons.  

In applying a DEC formalism to curvature operators on the PF
manifolds, we want to utilize the known properties of curvature in RC
and use as our guide the Regge action principle.  It is known from the
canonical, continuum analysis of PF curvature that any loop of
parallel transport in a plane orthogonal to a hinge will non-trivially
transform tangent vectors carried around the loop.  This occurs even
when the loop of parallel transport does not intersect the orthogonal
dual polygon to a hinge (independent of how one defines the dual
lattice).  As a result, the curvature remains non-zero as long as the
loop of parallel transport contains a non-trivial projection onto a
surface parallel to the dual polygon.  Therefore, the discrete measure
of the curvature in the hybrid domain of a hinge is given by
\begin{align}  \label{eq:EHR}
R \rightarrow \int_{V_{h}} R\ dV_{\rm proper}= \frac{1}{{d\choose 2}}\int_{h} \int_{h^{*}}
\left({\bf Riem}\cdot dh^{*}\right) dh = 2\defect{h} A_h,
\end{align}
where the combinatoric normalization comes from the decomposition of
the volume.  We then notice that this is the total curvature across
the hybrid cell.  Yet it has little direct indication of the local
tensorial content of the Riemann tensor.  From a DEC perspective, we
wish to find a projection of ${\bf Riem}$ into $\PFtri$ that preserves
this total curvature. 
  
Since the curvature in PF manifolds is entirely projected onto the
polygonal dual $\hinge^*$ to $\hinge$ we then require that this result
be constant over $\hinge$ in order to recover \refEQ{eq:EHR}. This is
a direct indication that the discrete form defines a constant field
over $\hinge$.  If we take the Hodge dual on the discrete forms,
then we also have,
$$ \int_{h} \left[ \int_{h^{*}} {\bf Riem}\cdot dh^{*}\right] dh = 2\defect{h} A_h   
     = \int_{h^{*}} \left[ \int_{h} \star{\bf Riem} \cdot dh \right]. 
$$
The scalar coefficient is thus constant over the hybrid domain, and we
now define an approach based on the standard DEC that explicitly
assigns the discrete forms over the hybrid measures.

In an explicit volume-based DEC, the base discretization is done as
before by projection of the differential $\omega$ onto a simplex or
dual polytope in $\TriMan$ that corresponds to a simplex or dual polytope
in $\PFtri$,
$$ \omega_{\polys{k}}  = \frac{\TSIP{\omega}{\polys{k}}}{ |\polys{k}|}.$$
Our discrete forms then become the geometric objects
$\omega_{\polys{k}} \polys{k}$ where $\polys{k}$'s are the $k$-forms
in the hybrid cells for $\polys{k}$.  The $\polys{k}$ does not merely
represent the lattice element, but the family of surfaces in
$V_{\polys{k}}$ parallel to $\polys{k}$, each such surface mapped back
to $\polys{k}$ when viewed from outside $V_{\polys{k}}$, i.e. when
coarse-grained to smooth over the internal structure.  As a comparison
to the continuum, we have the relationship
$$
\LIP{\omega}{\polys{k}} = \int_{V} \omega_{\polys{k}} dV,
$$
using our previous definition of $\omega_{\polys{k}}$.  Then, using the
projection of $\omega$ onto a lattice element,
$\TSIP{\omega}{\polys{k}} = \int\omega_{\polys{k}}{\polys{k}}$ we have
\begin{equation}
\LIP{\omega}{\polys{k}} = \frac{1}{{d\choose k}} \int_{\star\polys{k}}
\TSIP{\omega}{\polys{k}} d(\star\polys{k}).
\end{equation}
We then take the coefficient $\omega_{\polys{k}}$ as a scalar function
defined over the dual polytope $\star\polys{k}$ but with components
only lying in the surfaces parallel to $\polys{k}$.  The discrete
measure of a $k$-form $\omega$ is
\begin{equation}\label{eq:L2Discrete}
  \LIP{\omega}{\polys{k}} : = \frac{1}{{d\choose k}}\int_{V_{\polys{k}}} \omega_{\polys{k}} \polys{k} \wedge \star\polys{k}.
\end{equation}
Further, this volume measure has the standard property that the Hodge dual preserves the coefficient
and so we have
\begin{equation}\label{eq:L2DiscreteHodge}
\LIP{\star\omega}{ \star\polys{k}} = \LIP{\omega}{\star(\star\polys{k})} = \LIP{\omega}{\polys{k}}.
\end{equation}
 The definitions of the exterior derivative and
co-derivative follow  from Stokes' theorem applied to the $L^{2}$-inner product;
\begin{align}\label{eq:L2ExDer}
\LIP{d\omega}{\polys{k+1}} &= \frac{1}{{d\choose k+1}}\int_{V_{\polys{k+1}}}  
\TSIP{d\omega}{\polys{k+1}} d\polys{k+1} d(\star\polys{k+1})  \nonumber \\
&= \frac{1}{{d\choose k+1}}\frac{1}{k+1} \int_{V_{\polys{k+1}}}\sum_{\polys{k}} \TSIP{\omega}{\polys{k}} d\polys{k} d\MomArm{\polys{k}}{\polys{k+1}} d\star\polys{k+1}
 = \LIP{\omega}{\latbound \polys{k+1}}|_{\polys{k}}.
\end{align}
where we have decomposed $\polys{k+1}$ into $\polys{k}$'s and the
moment arms $\MomArm{\polys{k}}{\polys{k+1}}$.  Moreover, the integral
  is maintained over the measure corresponding to $\polys{k+1}$ and so
  the total volume integral is over the volume
  $V_{\polys{k}\polys{k+1}}$.  After integrating and combining
  appropriate terms, we then have
\begin{equation}
\LIP{d\omega}{\polys{k+1}}  =  \frac{1}{{d\choose k+1}}\frac{1}{ (k+1)} \sum_{\polys{k} \in \polys{k+1}} 
\omega_{\polys{k}} |\polys{k}| |m_{\polys{k}{\star\polys{k+1}}}| |\star\polys{k+1}|  
=\sum_{\polys{k}} \omega_{\polys{k}} V_{\polys{k}\polys{k+1}}
\end{equation}
Using similar properties of the co-derivative in the local
inner-product, we obtain the measure of the discrete exterior
co-derivative;
\begin{equation}
\LIP{\delta \omega}{\polys{k-1}}  =  \sum_{\polys{k} \ni  \polys{k-1}} \omega_{\polys{k}} V_{\polys{k}\polys{k-1}}.
\end{equation}

Some key points and assumptions are useful to highlight before
directly using this formalism.  A core assumption for the DEC
approach to vector-space valued differential forms is that we only
explicitly discretize the base manifold.  In this sense, we only
discretize the tangent space components and examine the constraints
this puts on components in the space of values.  This is crucial for
understanding the role of vector-space valued forms in the lattice.
For a basis of the discrete forms, we expand a $k$-chain in terms of
the lattice elements, while the basis in the space of values is given
by the unit $p$-forms with magnitude given by the coefficients defined
in the discretization.  Secondly, while discretization is done on a
given lattice element, the $k$-form field is extended as a constant
field domain of support for the $k$-element and measured on the hybrid
cell.  Under a standard assumption of the assignment of a field value
to the lattice element, the discrete form would be distributionally
valued on the lattice element. However, our measure requires the
valuation to be constant across the orthogonal subspace to the lattice
element.  This is essential to properly recover local integral
measures as averages over a given finite domain.

We have provided a scheme based on standard DEC where the measures of
$k$-forms are given by integral $d$-measures instead of local
$k$-measures on $k$-elements.  This has the advantage of simplifying
our evaluation of the discrete forms and providing a consistent
framework for the understanding of the Hilbert action. However, we
have only specified how one takes differential forms and discretizes
them, but not how the inherent properties of the lattice affect the
coefficients in the discretization.  To examine the DEC approach to
curvature in PF manifolds, we must examine properties of local
curvature operators in addition to the integral measures provided by
the Hilbert action.  In the next section we analyze the standard
curvature operators and the manifold geometry from the joint
perspective of DEC and RC.

\section{Curvature Forms in Piecewise-flat
  Manifolds}\label{sec:CurvPF}

Having laid out the foundations for analyzing curvature we can now
provide measures of local components for the standard curvature
operators.  In particular, we examine the nature of curvature in the
hybrid domain to codimension 2 hinges, simplicial edges, and
simplicial vertexes.  We have already discussed the role of the
Hilbert action in measuring the curvature.  We now specify how this
can be obtained and used to derive local curvature operators.

The Riemann tensor is defined as a ${1 \choose 1}$-tensor valued
$2$-form, or (by the  raising operation in the space of values) as a
bivector-valued $2$-form;
\begin{align}
{\bf Riem} = \frac{1}{2} \sovVec{\mu} \wedge \sovForm{\nu} R^{\mu}_{\
  \ \nu \sigma\tau} \TSForm{\sigma}\wedge \TSForm{\tau} = \frac{1}{4}
\sovVec{\mu}\wedge \sovVec{\nu} \underbrace{R^{\mu\nu}_{\ \ \
    \sigma\tau}}_{g^{\nu\alpha}R^{\mu}_{\ \ \alpha \sigma\tau}}
\TSForm{\sigma}\wedge \TSForm{\tau}.
\end{align}
The Riemann tensor thus takes as an argument a bivector from the base
manifold to return a rotation operator or rotation bivector in the
space of values, i.e. for a loop of parallel transport enclosing a
given area an arbitrary vector ${\bf A }= \sovVec{\mu}A^{\mu}$ will be
transformed as ${\bf A} \rightarrow {\bf A}' = {\bf A} + {\bf \delta
  A}$ with
$$
\delta A^{\mu} = -R^{\mu}_{\ \ \nu \sigma \tau}A^{\nu}
\Sigma^{\sigma\tau}.
$$ 
In the discretization, the magnitude of $\delta{\bf A}$ will be
determined by the geometry, while the basis bivectors in the space of
values will remain a set of orthonormal bivectors.  We can then set
the basis of the tangent space as the integrated measures of the
lattice elements.

Similarly the Ricci curvature tensor is a vector-valued $1$-form
which is given by inserting an inverse solder form (a $1$-form valued
vector) into the bivector-valued curvature $2$-form;
\begin{align}
  {\bf Rc}= \iota_{d{\cal P}^{-1}} {\bf Riem} = {\bf Riem}(\sovForm{\mu}\TSVec{\mu}) = \sovVec{\nu}
  R^{\mu\nu}_{\ \ \ \mu \tau} \TSForm{\tau}.
\end{align}
The inverse solder form induces a trace on the Riemann curvature
tensor.  Inserting the inverse solder form into the Ricci curvature
yields the scalar curvature,
\begin{align}
  R = \iota_{d{\cal P}^{-1}}\iota_{d{\cal P}^{-1}} {\bf Riem}= {\bf Riem}(\sovForm{\mu}\TSVec{\mu}, \sovForm{\nu}\TSVec{\nu}) =
  {\bf Rc}(\sovForm{\nu}\TSVec{\nu})= R^{\mu\nu}_{\ \ \ \mu \nu} .
\end{align}

In this section we will illustrate some properties of the
representation of curvature via local operators in the PF manifold
using the inherent discrete structure.  Our general strategy is to
formalize the notion of parallel transport and curvature interior to
the hybrid cells at the scale lengths shorter than the local scale of
the discretization.  We then generate a representation of the Riemann
curvature operator from the scale of the discretization, i.e. a zeroth
order coarse-graining over the interior structure of the hybrid cells.
This process is then used to reexamine the Ricci tensor from
\cite{McDonald:RicciRC}.

\subsection{The Riemann Tensor, Locally Einsteinian Structure, and the Conic Singularity}\label{subsec:EinsConic}

Given the PF manifold $\PFtri$, the domain of support associated to a
codimension $2$ hinge, $\hinge$, is the convex hull (interior to
$\PFtri$) of the hinge and a polygonal loop of parallel
transport. Meanwhile the meaningful domain is the measure which is
given by the oriented sum of irreducible hybrid cells.  In an
irreducible domain, there exists an orthogonal basis of vectors $\{
m_{i}\}$ which are defined the same as before.  Since the subspace of
the hinge $\hinge$ is isomorphic to $\mathds{R}^{d-2}$, we can choose a
complete set of the $\{ m_i\}$ for $0\leq i\leq d-3$ from any
irreducible hybrid cell containing $\hinge$.  The last two vectors of
any irreducible hybrid cell containing $\hinge$ then span $\hinge^*$.
We take these two vectors as $\MomArm{\hinge}{\ld}$ and $\ld$.  In the
full measure of $V_{\hinge}$, we have a set of $\{\MomArm{\hinge}{\ld},
\ld\}$ for each $\ld \in \latbound\hinge^{*}$ such that $\hinge^{*} =
\frac{1}{2} \sum_{\ld \in \latbound{\hinge^{*}}} \ld\wedge
\MomArm{\hinge}{\ld}$ (with a change in orientation as necessary to
ensure consistent orientation across the domain).  We instead focus on
a basis of bivectors (and $2$-forms) that leads to the representation
of ${\bf Riem}$ as a ${d \choose 2} \times {d\choose 2}$ matrix.  We
then ask ``How do we assign coefficients to the discretized Riemann
tensor in this basis?''

On the interior of the hybrid cell $V_{\hinge}$ the tangential
components of the metric on the boundary between two neighboring
tangent spaces are constant while there is generally a discontinuous
change in the normal components of the metric across the boundary,
when viewed in the basis of one of the $d$-simplexes \cite{FL:1984,
  Williams:GeoDev1}.  As we encircle a hinge with a loop of parallel
transport, we notice that the components of the metric tangential to
$\hinge$ are always constant across this domain (the surfaces parallel
to $\hinge$ are always flat and remain parallel to $\hinge$).  Hence,
the components of any vector in the space spanned by $\hinge$ will be
unaffected by parallel transport around $\hinge$.  In our vector
basis, the $\{ m_{i}\}$ for $0\leq i\leq d-3 $ will be unaffected by
parallel transport.  However, the $\MomArm{\hinge}{\ld}s$, which is
tangential to a given $\polys{d-1}$, will generally have components
normal to any other $\polys{d-1} \in V_{\hinge}$ and so will
experience a rotation when transported around a loop encircling
$\hinge$.  The amount of this rotation is always given by the deficit,
$\defect{\hinge}$.  Moreover, this is true regardless of the size of
the loop and depends only on whether the loop has a non-trivial
projection into the plane orthogonal to $\hinge$, i.e. $\hinge^*$.
This indicates that the basis we have chosen is anomalous and that for
a more robust analysis we must take care in our choice of basis.

\begin{thm}\label{thm:Riem}
The Riemann tensor, $\bf{Riem}$, on codimension $2$ hinges of $\PFtri$ are rank $1$ tensors with an eigen-decomposition
$$ {\bf Riem} = \hat{\hinge}^{*}\ {d \choose 2} \frac{\defect{h}}{A_{h}^{*}} \ \hinge^*.$$
\end{thm}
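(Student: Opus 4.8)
The plan is to establish the two halves of the claim separately: first the rank-$1$ (single-plane) structure of ${\bf Riem}$, which is purely geometric, and then the scalar coefficient, which is fixed by matching to the Regge action. First I would fix an irreducible hybrid cell containing $\hinge$ and represent ${\bf Riem}$ as a linear operator on the ${d\choose 2}$-dimensional space of bivectors, using the orthonormal frame $\{\Solder{i}\}_{0\le i\le d-3}$ spanning $\hinge$ together with the pair $\{\MomArm{\hinge}{\ld},\ld\}$ spanning $\hinge^{*}$, exactly as set up above. The preceding discussion shows that parallel transport around $\hinge$ fixes every vector tangent to $\hinge$ and rotates only the components lying in $\hat{\hinge}^{*}$, with the rotation occurring entirely within $\hat{\hinge}^{*}$. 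Translated into the operator on bivectors, this says ${\bf Riem}$ annihilates every bivector orthogonal to $\hinge^{*}$ (a loop with no projection onto $\hat{\hinge}^{*}$ induces no rotation) and has one-dimensional image spanned by $\hat{\hinge}^{*}$. Hence ${\bf Riem}$ is rank $1$, with input functional proportional to $\hinge^{*}$ and output direction $\hat{\hinge}^{*}$, so that ${\bf Riem} = \hat{\hinge}^{*}\, c\, \hinge^{*}$ for a single scalar $c$ to be determined.

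To fix $c$ I would impose consistency with the exact Regge/Einstein--Hilbert total curvature. By \refEQ{eq:EHR} the curvature integrated over the hybrid cell satisfies $\int_{V_{\hinge}} R\, dV = 2\defect{\hinge} A_{\hinge}$, and, as argued in the preceding subsection, the volume-based DEC prescription requires the scalar density $R$ to be constant across $V_{\hinge}$. Using the hybrid-volume measure $V_{\hinge} = \frac{1}{{d\choose 2}} A_{\hinge}\,A_{\hinge}^{*}$ (the $k=d-2$ case of the volume theorem established above), the constant density is forced to be $R = 2{d\choose 2}\,\defect{\hinge}/A_{\hinge}^{*}$. Finally, since ${\bf Riem}$ carries a single non-vanishing sectional curvature $K$ in the plane $\hat{\hinge}^{*}$, the double trace gives $R = 2K$, so $c = K = {d\choose 2}\,\defect{\hinge}/A_{\hinge}^{*}$, which is exactly the coefficient in the stated decomposition.

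I would close by checking that the assembled operator reproduces the known geometry: contracting ${\bf Riem}$ twice against the inverse solder form $d{\cal P}^{-1}$ returns the scalar density above and hence \refEQ{eq:EHR}; feeding in a unit loop bivector in the $\hat{\hinge}^{*}$ plane returns a rotation generator oriented along $\hat{\hinge}^{*}$; and every loop with no component along $\hat{\hinge}^{*}$ is annihilated, matching the conic-singularity picture. The main obstacle is not the rank-$1$ claim, which is essentially immediate from the parallel-transport argument, but the bookkeeping of normalizations in $c$: one must reconcile the area-independent (distributional) rotation by $\defect{\hinge}$ with the constant-density smearing over the dual polygon $\hinge^{*}$, and track how the combinatorial factor ${d\choose 2}$ and the dual area $A_{\hinge}^{*}$ enter through both the hybrid-volume decomposition and the double-trace relation $R=2K$. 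Care is also needed so that the normalization of the input functional $\hinge^{*}$ relative to the unit output bivector $\hat{\hinge}^{*}$ is consistent with reading $c$ off as the sectional curvature rather than as the total rotation ${d\choose 2}\defect{\hinge}$.
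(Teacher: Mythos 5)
Your proposal reaches the correct decomposition, but by a genuinely different route from the paper's. The paper does not fix the coefficient by matching to the Regge action at all: it chooses a basis $\{\sigma^{i}\}$ of ${d\choose 2}$ bivectors all in \emph{general position} (each with non-zero projection onto $\hinge^{*}$), observes that the conic singularity rotates every such $\sigma^{i}$ by the full $\defect{\hinge}$ regardless of the size of its projection, writes the resulting matrix with identical entries in each row, and then imposes symmetry/normalization so that ${\bf Riem}$ becomes a rank-$1$ matrix with uniform entries whose single eigenvalue is the sum over the ${d\choose 2}$ columns --- this counting is where the factor ${d\choose 2}\,\defect{\hinge}$ comes from, with the $1/A_{\hinge^{*}}$ appearing only at the end when the basis is rescaled to the full lattice element $\hinge^{*}$. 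You instead take the rank-$1$ form as essentially immediate in an adapted basis and extract the coefficient from $\int_{V_{\hinge}}R\,dV=2\defect{\hinge}A_{\hinge}$ together with $V_{\hinge}=\tfrac{1}{{d\choose 2}}A_{\hinge}A_{\hinge}^{*}$ and $R=2K$; this is consistent with the paper's Eqs.~(\ref{eq:EHR}) and (\ref{eq:UnNormScal}) and lands on the same $c={d\choose 2}\defect{\hinge}/A_{\hinge}^{*}$. What your route buys is a shorter, more invariant determination of the coefficient anchored to the exact action; what it glosses over is the step the paper treats as central: the raw holonomy map is \emph{not} linear (any loop with any non-zero projection onto $\hinge^{*}$ is rotated by the full $\defect{\hinge}$), so declaring that ${\bf Riem}$ ``annihilates bivectors orthogonal to $\hinge^{*}$ and is rank $1$'' already presupposes the projection convention that the paper makes explicit via the general-position basis and the symmetrization of Eq.~(\ref{eq:RiemRCAsymm}). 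You flag this in your closing remarks, and your integral constraint effectively encodes the same convention, so I would call this a difference of emphasis rather than a gap --- but if you write this up you should state the linearization convention before invoking rank $1$, since that is the only place the discrete geometry genuinely forces a choice.
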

\begin{proof}
\begin{subequations}
  We assign a basis $\{ \sigma^{i}\}$ such that for each $\sigma^{i}$
  we have
\begin{equation}\label{eq:BasisGenPos}
\sigma^{i} \cdot \hinge^* = 2g^{\mu\nu} g^{\alpha \beta}
\sigma^{i}_{(\mu\alpha)} \hinge^{*}_{\nu\beta} \neq 0
\end{equation}
where $\sigma^{i}_{(\mu\alpha)} =
\frac{1}{2}\left[\sigma^{i}_{\mu\alpha} +
  \sigma^{i}_{\alpha\mu}\right]$.  Since each $\sigma^i$ has a
non-zero projection onto $\hinge^{*}$, we have
\begin{align}
{\bf Riem}(\sigma_{i})  = \hat{\hinge}^{*} \defect{\hinge}.
\end{align}
Therefore the Riemann tensor associates to each basis bivector
$\sigma_{i}$ a rotation bivector oriented along $\hinge^*$ with
magnitude of rotation equal to $\defect{\hinge}$.  Here we have
inserted an oriented area in ${\bf Riem}$ to obtain
\begin{align}
{\bf Riem}(\sigma_{i})  = \hat{\hinge}^{*} \defect{\hinge}.
\end{align}
This allows us to assign a matrix representation to the Riemann
curvature tensor,
\begin{align}\label{eq:RiemRCAsymm}
{\bf Riem} \doteq \left(      \begin{array}{cccc}
\defect{\hinge}\  (\hat{\hinge}^{*}\cdot \sigma^{1}) & \defect{\hinge}\  (\hat{\hinge}^{*} \cdot \sigma^{1})  
& \defect{\hinge} (\hat{\hinge}^{*} \cdot \sigma^{1})& \cdots \\
\defect{\hinge}\  (\hat{\hinge}^{*} \cdot \sigma^{2})  & \defect{\hinge}\  (\hat{\hinge}^{*} \cdot \sigma^{2}) 
& \defect{\hinge} (\hat{\hinge}^{*} \cdot \sigma^{2})  &\cdots \\
\defect{\hinge}\  (\hat{\hinge}^{*} \cdot \sigma^{3})  & \defect{\hinge}\  (\hat{\hinge}^{*} \cdot \sigma^{3}) 
& \defect{\hinge} (\hat{\hinge}^{*} \cdot \sigma^{3})  &\cdots\\
\vdots & \vdots& \vdots & \ddots
\end{array}\right),
\end{align}
where each row has identical elements and the sum of each column gives
$\defect{\hinge} \hat{\hinge}^{*}$.  Since a Riemann tensor is a
symmetric tensor across the basis of 2-forms, we require that the
matrix representation adhere to the symmetry.  The asymmetry that
appears in \refEQ{eq:RiemRCAsymm} is due to our asymmetric use of the
conic singularity.  To account for this, we further require that the
inner product $\hat{\hinge}\cdot\sigma^{i} $ be normalized in both the
space of values and tangent space.  This is equivalent to requiring
that any loop of parallel transport is treated as a 2-surface in the
space with constant sectional curvature equal to the sectional
curvature along $\hinge^{*}$.  Hence any loop with non-trivial
projection onto $\hinge^{*}$ gets maximally projected onto the dual
polygon $\hinge^{*}$ as in \refFig{fig:hinproj} and the Riemann tensor
acts only on this projected loop.
\begin{center}
\begin{figure}[h]
\includegraphics[width=2in]{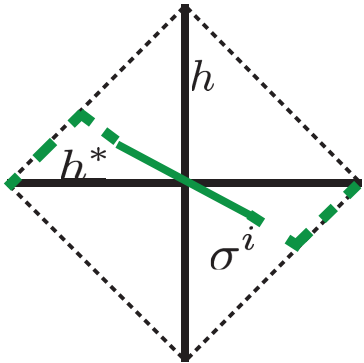}
\caption{The conic singularity has the peculiar property that the
  Riemann curvature tensor operates on any area of parallel transport
  (with non-zero projection onto $\hinge^*$) as if the area were
  projected identically onto $\hinge^{*}$.  While we take the area of
  this projection to be that of $\hinge^{*}$, this is not a necessity
  of the PF manifold.  All that is required is that each area only be
  acted upon by the sectional curvature of the plane $\hinge^*$.
  Taking the area to be equal to $A_{\hinge^*}= |\hinge^*|$ is a normalization
  choice that is natural given the dual operation defined and the
  inherent orthogonality of the dual lattices, see for example
  \cite{McDonald:DualTess, McDonald:RF}. }\label{fig:hinproj}
\end{figure}
\end{center}
The Riemann tensor becomes a matrix with uniform entries, i.e. a rank
$1$ matrix with eigenvalue given by the sum over a row or column.
Since there are ${d \choose 2}$ basis $2$-forms, we have a Riemann
tensor with a single component
\begin{equation} 
{\bf Riem} = \hat{\hinge}^{*} \frac{d(d-1)}{2} \
  \defect{\hinge} \ \hat{\hinge}^* \doteq
  \frac{d(d-1)}{2}\left( \begin{array}{cccc}
      \defect{\hinge} & 0 & 0 & \cdots \\
      0   & 0  & 0   &\cdots \\
      \vdots & \vdots& \vdots & \ddots
\end{array}\right).
\end{equation}
If we then put the basis of the horizontal section as the full lattice
elements $\hinge^{*}$ and the orthogonal $2$-forms, then we have
\begin{equation} {\bf Riem} = \hat{\hinge}^{*} \frac{d(d-1)}{2} \
  \frac{\defect{\hinge}}{A_{h^*}} \ {\hinge}^*.
\end{equation}
\end{subequations}
\end{proof}

In the eigenvalue analysis, we have assigned a projection operation on
the hybrid cell that treats any non-trivial loop of parallel transport
as a parallel transport around $\hinge^*$.  This tells us that the
Riemann tensor in this domain has the special property that there is
exactly one non-zero sectional curvature in the plane $\hinge^{*}$,
and a trivial flat subspace of bivectors orthogonal to $\hinge^*$.

While this is now a local measure of the Riemann tensor as obtained
from an arbitrary sampling of the space and evaluation of its
eigenspace decomposition, it does not carry with it the flavor of the
Riemann tensor as one would see from an evaluation on a single loop of
parallel transport.  This comes from the imposition of the conic
singularity that all bivectors (with non-zero projection on
$\hinge^*$) have assigned to them a equal sectional curvatures.
Therefore, a basis in general position (all basis elements satisfying
\refEQ{eq:BasisGenPos}) acts as though the space were an Einstein
space.  The eigenvalue is the local measure over the hybrid cell and
is the value through which we perform analysis in the DEC.

From this eigen-decomposition of the Riemann tensor we can regain a
local representation by normalizing the non-trivial eigenvalue by the
combinatoric factor counting the basis elements of a basis in general
position. In our case there are $ {d \choose 2}$ $2$-forms and the
normalization factor is given by ${d \choose 2}^{-1} = 1/{d \choose
  2}$.  This allows us to back-track from the eigenvalue ( as a
measure of all surfaces in the domain) to the local evaluation of the
Riemann tensor as a measure of the $2$-surfaces in the domain of
support;
$${\bf Riem} = \hat{\hinge}^{*}\ \ \frac{\defect{\hinge}}{A_{h^{*}}} \ \  {\hinge}^{*}. $$
This normalization process is a final step in the description of the
discretized tensors on the PF manifold.  It should be noted that any
and all calculus is done on the local measures of the differential
forms and tensors, i.e. the tensors integrated over the local volume.
The normalized, local tensors are convenient representations that are
made possible by the locally simple structure of the hybrid cells.

Further, we can assign a Ricci tensor and Ricci scalar in this hybrid
domain.  If we span $h^{*}$ by the local choice of
$\{\MomArm{\hinge}{\ld},\ld\}$, then we have
\begin{subequations} 
\begin{equation}\label{eq:UnNormRiem}
\bar{R}^{\MomArm{\hinge}{\ld} \ld}_{\ \ \ \ \ \ \MomArm{\hinge}{\ld} \ld} = -\bar{R}^{\ld \MomArm{\hinge}{\ld}}_{\ \ \ \ \   \MomArm{\hinge}{\ld} \ld} = -\bar{R}^{\MomArm{\hinge}{\ld} \ld}_{\ \ \ \ \ \ \ld \MomArm{\hinge}{\ld}} = \bar{R}^{\ld\MomArm{\hinge}{\ld}}_{\ \ \ \ \ \ \ld \MomArm{\hinge}{\ld}} = {d \choose 2} \frac{\defect{\hinge}}{A_{\hinge^*}},
\end{equation}
which leads to Ricci tensor components
\begin{equation}\label{eq:UnNormRic}
\bar{R}^{\MomArm{\hinge}{\ld}}_{\ \ \ \ \  \MomArm{\hinge}{\ld}} = \bar{R}^{\ld}_{\ \ \ld}  = {d \choose 2}\frac{\defect{\hinge}}{A_{\hinge^*}},
\end{equation}
and scalar curvature
\begin{equation}\label{eq:UnNormScal}
\bar{R}= 2 {d \choose 2}\frac{\defect{\hinge}}{A_{\hinge^*}},
\end{equation}
\end{subequations}
where the barred notation indicates the unnormalized representation of
the eigenvalues.  Normalizing these curvature tensors given their
differential forms character requires one to normalize by a factor of
${d\choose 2}$ for the Riemann tensor, ${d \choose 1}$ for the Ricci
tensor and ${d \choose 0}$ for the Ricci scalar.  Hence the normalized
curvature tensors are
\begin{subequations}
\begin{equation}\label{eq:NormRiem}
{R}^{\MomArm{\hinge}{\ld} \ld}_{\ \ \ \ \ \ \MomArm{\hinge}{\ld} \ld} = -{R}^{\ld \MomArm{\hinge}{\ld}}_{\ \ \ \ \   \MomArm{\hinge}{\ld} \ld} = -{R}^{\MomArm{\hinge}{\ld} \ld}_{\ \ \ \ \ \ \ld \MomArm{\hinge}{\ld}} = {R}^{\ld\MomArm{\hinge}{\ld}}_{\ \ \ \ \ \ \ld \MomArm{\hinge}{\ld}}  = \frac{\defect{\hinge}}{A_{\hinge^*}}
\end{equation}
\begin{equation}\label{eq:NormRic}
{R}^{\MomArm{\hinge}{\ld}}_{\ \ \ \ \  \MomArm{\hinge}{\ld}} = {R}^{\ld}_{\ \ \ld}=\frac{d-1}{2}\frac{\defect{\hinge}}{A_{\hinge^*}}
\end{equation}
\begin{equation}\label{eq:NormScal}
R = 2 {d \choose 2}\frac{\defect{\hinge}}{A_{\hinge^*}}.
\end{equation}
\end{subequations}
These are the oriented and normalized versions of the Riemann, Ricci
and scalar curvatures within a hybrid domain $V_{\hinge}$.  We often
only need the unoriented measure of the curvature forms and so summing
over the orientations introduces a factor of $2$ into
Eqs. (\ref{eq:UnNormRiem}), (\ref{eq:UnNormRic}), (\ref{eq:NormRiem})
and (\ref{eq:NormRic}).  We now have an accounting of the measures of
curvature on the hybrid domain on a hinge that is analogous to that
obtained by Friedberg and Lee \cite{FL:1984}.  On a qualitative scale,
we have the same form of the Riemann, Ricci and scalar curvatures
obtained in \cite{FL:1984}.  The quantitative distinction comes from
our use of the hybrid volume as a domain of support and our treatment
of the Dirac distribution on $\hinge$ as spread out over the entire
domain $V_{\hinge}$, instead of distributionally valued only on
$\hinge$.

\subsection{The Ricci tensor and its double dual}\label{subsec:RicciDD}

The Riemann tensor had a natural association to the dual polygons
$\hinge^*$ to the codimension $2$ hinges $\hinge$ given that its
differential form properties are that of a bivector-valued $2$-form.
Hence one need only project two indices of the Riemann tensor onto the
discretization.  In evaluating the Riemann tensor in the simplicial
discretization we also were able to express representations of the
Ricci tensor and scalar curvature in this domain.  We now shift
attention to the PF representation of the Ricci tensor.  Since the
Ricci tensor's natural representation is that of a vector-valued
one-form,  its direct discretization is on the 1-skeletons of the
dual and simplicial lattices.  In \cite{McDonald:RicciRC} we derived
representations of the Ricci tensor in both the dual and simplicial
1-skeletons as weighted averages of the Riemann curvatures.  We now
try to elucidate the properties of these derivations and draw
comparisons with an updated understanding of the curvature.

The Ricci tensor is given by the bivector-valued $2$-form curvature
operator acting on the inverse solder form $d{\cal P}^{-1} =
\sovForm{\mu}\TSVec{\mu}$, inducing a trace on the Riemann tensor.  As
a vector-valued $1$-form, the Ricci tensor is directly discretized on
$1$-forms orthogonal to the hinges.  Since ${\bf Riem}$ only has
components in the planes $\hinge^{*}$, ${\bf Rc}$ only takes
components along $\ld$ or $\MomArm{\hinge}{\ld}$.  The components
along $\MomArm{\hinge}{\ld}$ trivially give only one component from
${\bf Riem}$ and are contained in $V_{\hinge}$.  Calculating a Ricci
tensor in the direction of $\MomArm{\ld}{\hinge}$ becomes a sum over
directions orthogonal to $\MomArm{\ld}{\hinge}$, only one of which
gives a non-zero contribution.  Moreover, the $\MomArm{\hinge}{\ld}$
are virtual--being members of neither the simplicial or dual
lattices-- carry no inherent discrete differential forms.  This is
simply a statement about the non-independence of the curvature
directed along $\MomArm{\hinge}{\ld}$ and the Riemann curvature
associated to $\hinge$.  At the same time, in a small domain
surrounding any given $\ld$, there are $d$ distinct holonomies with
independent curvature operators.  Each of these distinct curvature
tensors have Ricci curvature components oriented along $\ld$.
Therefore, there exist non-trivial representation of ${\bf Rc}$s, and
distinct from the ${\bf Riem}$ of the hinges, on the $\ld$'s of the
dual lattice.  In \cite{McDonald:RicciRC} we sought to ensure that the
integrated measure of curvature associated $\hinge^{*}$ and $\ld$ was
preserved over the domain common to these two elements.  This is
equivalent to the continuum requirement that
$$ \text{tr}\LIP{\bf Rc}{\TSForm{a}}_{\Omega} = \text{tr}\LIP{\bf
  Riem}{\TSForm{a} \wedge \TSForm{b}}_{\Omega}
$$ 
over some common domain $\Omega$.  The measure of curvature one
obtains from integration over the domain is given by the one
non-trivial eigenvalue of the curvature in that domain, and hence the
unnormalized measure.  

To trace the Riemann curvature in the domain of
$\ld$ is  to sum over the polygonal loops $\hinge^{*} \ni \ld$
given that the individual domains of overlap between $\ld$ and each
$\hinge^*$ satisfies
\begin{equation}
\bar{R}_{\ld} V_{\ld\hinge^{*}} = \bar{R}_{\hinge^*}V_{\hinge^* \ld}.
\end{equation}
Doing so gives a scalar measure of the Ricci curvature on $\ld$
\begin{equation}
\bar{R}_{\ld} = \frac{\sum_{\hinge^*} R_{\hinge^*} V_{\hinge^* \ld}}{V_{\ld}}.
\end{equation}
Given that this is an integrated measure, it samples all orientations
of the loops of parallel transport and one naturally picks up the
unnormalized, integrated form of the Riemann curvature tensor and an
overall factor $d(d-1)$, 
\begin{equation}\label{eq:UnNormRicLD}
\bar{R}_{\ld}  = d(d-1)\volav{\frac{\defect{\hinge}}{A_{h^*}}}{\ld},
\end{equation}
where we have used the volume-weighted average
$$\volav{A_{\hinge}}{\ld} = \frac{ \sum_{\hinge^{*}\ni\ld}
  A_{h}V_{\hinge \ld }}{V_{\ld}}.
$$
This is an association of a scalar quantity to a $1$-form on the
lattice.   We can again normalize by the dimension of the space of $1$-forms
to obtain
\begin{equation}\label{eq:NormRicLD}
\bar{R}_{\ld}  = (d-1)\volav{\frac{\defect{\hinge}}{A_{h^*}}}{\ld}.
\end{equation}

We can further associate to this measure a directionality.  As we have
already mentioned, each of the curvature tensors contributing to
$R_{\ld}$ ($\bar{R}_{\ld}$) is already oriented along $\ld$ in each
subdomain $V_{\ld\hinge}$ since the measure of ${\bf Rc}$ on those
domains is obtained by contraction of ${\bf Riem}$ with
$\MomArm{\ld}{\hinge}$.  Therefore, the measures $\bar{R}_{\ld}$ and
$R_{\ld}$ can be considered as coefficients on the one-form oriented
along $\ld$.  This assigns both directionality and magnitude to ${\bf
  Rc}$ on a given $\ld$.

It is useful to note at this point that we have associated a scalar
quantity to ${\bf Rc}$ on $\ld$ and assigned to it a directionality.
However, the scalar coefficient is dependent on the domain of
integration.  While we associate to $\ld$ a component of ${\bf Rc}$ in
the direction of $\ld$, this object is not of the same class as the
${\bf Riem}$ on the hinges.  Whereas the curvature operators on the
hinges are constant over the codimension 2 hinges and treated as
constant over the domains $V_{\hinge}$ (whenever a subdomain also
encompasses the hinge itself), the Ricci curvatures are composed
explictly in terms of components of curvature operators whose
valuations only make sense within the subdomains $V_{\ld}{\hinge}$.
It is therefore notable that the scalar coefficient is only an
appropriate measure whenever a domain of interest encompasses the
entire $V_{\ld}$.  If a domain of interest only intersects a portion
of $V_{\ld}$, then one must suitably restrict the measures in the
definition of $R_{\ld}$ ($\bar{R}_{\ld}$).  This will come into play
as we now seek representations of ${\bf Rc}$ on the simplicial
lattice.

The natural discretization of ${\bf Rc}$ is on the dual lattice; however,
we have shown in \cite{McDonald:RicciRC} that a representation on the
simplicial edges is also possible by requiring that domains of overlap
between $V_{\ld}$ and $V_{\ls}$ give rise to the same measure of
integrated curvature.  We now want to show that is related to the
discrete version of the $1$-form double-dual of a vector-space valued
$1$-form.

In each irreducible hybrid cell common to both $\ld$ and $\ls$, we can
form a basis from the two vectors $\ls$ and $\ld$ as well as the set
of $d-2$ vectors spanning the subspace $\MomArm{\ls}{\ld}$, $\{
\Solder{i} \ |\ 1\leq i \leq d-2\}$.  The transformation from the dual
lattice to the simplicial lattice is done using the orthogonal
subspace $\MomArm{\ls}{\ld}$ applied through the commutative diagram
from \refEQ{eq:DD2}.  If we take the discrete form along $\ld$, then
we have $(\bar{R}_{\ld})\hspace{1.5 px} {\ld}$ as a vector-valued $1$-form with $\ld$ as
the trivial map from $\ld$ in the horizontal section to $\ld$ in the
vertical section.  Taking the dual of this assigns $\bar{R}_{\ld}$ to
$\ld^{*}$.  If a given $\ls$ is not contained in $\ld^{*}$ there is no
contribution of $\bar{R}_{\ld}$ to $\bar{R}_{\ls}$ and likewise in the
reverse. So we only consider when $\ld \in \ls^{*}$ and $\ls \in
\ld^{*}$.  Moreover, $\bar{R}_{\ld}$ is constructed from the Riemann
tensors evaluated on $\hinge^{*}$s and so we further expand the ${\bar
  R}_{\ld}$'s such that the dependence on $\bar{R}_{\hinge^*}$ is
  explicit.    If we then take the double dual and inner-derivative
  over the inverse solder forms on $\MomArm{\ls}{\ld}$,  we obtain
\begin{align} \label{eq:RicciDD}
\bar{R}_{\ls} V_{\ls\ld}  = \LIP{\iota_{\MomArm{\ls}{\ld}} \left(
    \star\bar{R}_{\ld}\right)}{\ls}_{V_{\ls\ld}} = \LIP{
  \star\bar{R}_{\ld} }{\ls
  \wedge \MomArm{\ls}{\ld}}_{V_{\ls\ld} }= \LIP{\bar{R}_{\ld}}{
    \star\left(\ls \wedge \MomArm{\ls}{\ld} \right) }_{V_{\ls\ld}} =
  \bar{R}_{\ld} V_{\ls\ld}.
\end{align}
This object is a measure of the contribution from those curvature
tensors with support in the defined domain.  The above integrated
curvatures are entirely connected to their domains of integration and
the restriction of the domains induces a restriction of the
integration on their definitions.  We then view \refEQ{eq:RicciDD} as
statement of dependence of the simplicial lattice Ricci curvature on
the restriction of the dual skeleton Ricci curvature.  As the latter
depends on multiple curvature tensors in multiple domains, the
restriction ensures that only those curvature operators with values in
the specified domain contribute the final result. Summing over all
$\ld$ that are incident to $\ls$, i.e. all $\ld \in \ls^*$, gives the
final measure of the unnormalized Ricci tensor on $\ls$;
\begin{align}
\bar{R}_{\ls} = \frac{\sum_{\ld \in \ls^{*}}
 \bar{ R}_{\ld}V_{\ld\ls}}{V_{\ls}}  = \frac{\sum_{h \ni \ls} \bar{R}_{\hinge}
  V_{\hinge\ls} }{V_{\ls}} = d(d-1) \frac{ \areaav{\defect{\hinge}}{\ls}}{\areaav{A_{h^*}}{\ls}}
\end{align}
where 
$$ \areaav{B_{\hinge}}{\ls} :=  \frac{ \sum_{\hinge \ni \ls}
  B_{\hinge}A_{\hinge \ls}}{\sum_{\hinge \ni \ls}
  A_{\hinge\ls}}, 
$$  
and we have used the relation
$$ V_{\ls} =  \sum_{\hinge \ni \ls} \frac{1}{{d\choose 2}} A_{\hinge \ls} A_{\hinge}^{*}.$$
It is should be clear from \refEQ{eq:RicciDD} that the object being
assigned to an $\ls$ is not the component of ${\bf Rc}$ along $\ls$,
but rather a measure of ${\bf Rc}$ in the orthogonal complement to
$\ls$. Summing over all $\ld$'s in $\ls^{*}$ provides a complete
measure of the ${\bf Rc}$ in that orthogonal complement to $\ls$.

After normalizing by the combinatorial factor ${d\choose 1}$
we get the normalized Ricci tensor on the simplicial edge $\ls$,
\begin{align}
{R}_{\ls} = (d-1) \frac{ \areaav{\defect{\hinge}}{\ls}}{\areaav{A_{h^*}}{\ls}}.
\end{align}
This demonstrates that we can formalize the transformation between the
dual and simplicial lattices via the trace of the double dual.
Moreover it provides a geometrically clear picture of the
transformation via the moment arm or generalized solder form between
$\ls$ and $\ld$.

The double-dual and its traces provide a direct path for the raising
and lowering operators common in general relativity and differential
geometry.  These were applied in \cite{McDonald:RicciRC} as a method
to obtain a dualized view of the Ricci tensor on the simplicial
lattice.  What is a particularly important lesson to be drawn from
this is that the Ricci tensor associated with any $\ls$ is not a
measure of the Ricci tensor in the direction of $\ls$ but an
average of Ricci tensors in the $(d-1)$-subspace orthogonal to $\ls$.
This was a crucial understanding in \cite{McDonald:RF} since one must
ensure that metric components in Hamilton's Ricci flow \cite{Hamilton:RF} change in
proportion to those components of the Ricci tensor.  If instead one
were to simply take metric components along $\ls$, Hamilton's Ricci
flow would not be recovered.  Rather one would obtain an orthogonal
flow to that of Hamilton's.

We have done this explicitly for the Ricci curvature and similar
results hold for the scalar curvature.   In particular, taking the
trace of the Ricci curvature, or  the double trace of the Riemann, we
assign a scalar curvature to a vertex, $\vd$, of the dual lattice;
\begin{equation}
R_{\vd} = \frac{\sum_{\hinge^{*}\ni \vd} \bar{R}_{\hinge^*}V_{\hinge^{*}
    \vd}}{V_{\vd} }.
\end{equation}
Since this is the scalar curvature, there is no distinction between
the normalized and unnormalized curvatures and hence we drop the bars
at the very beginning.     In \cite{McDonald:scalarRC} we showed that
scalar curvature on a vertex $\vs$ of the simplicial lattice is given
by 
\begin{equation}
 R_{\vs} = d(d-1)
\frac{\areaav{\defect{\hinge}}{\vs}}{\areaav{A_{\hinge^*}}{\vs}}.
\end{equation} 
We have thus described an intrinsic geometric derivation of the curvature operators and curvature scalar without explicit reference a limiting smooth sequence of surfaces.   

\section{Discussion and Conclusions}\label{sec:conclusion}

We have shown in this manuscript a revised formulation of DEC that is
based on the volume measures of differential forms on local domains of
compact support.  This formulation makes explicit the assignment of a
discrete form to a family of surfaces in a volume local to a lattice
element.  The characterization of a discrete form to the family of
surfaces provides a transparent view of the operations on discrete
forms, such as the exterior (co)-derivative.  This makes the DEC
approach more directly amenable to use in RC.

We built the volume-based DEC from the irreducible domains of the
lattice--the {\em monads} of space--that form the most basic
structures of the PF manifold.  These irreducible domains are domains
of supports for arbitrary $k$-forms in any given tangent space.  From
these irreducible cells we identified generalized solder forms and
local solder forms to allow for transformations between the simplicial
and dual lattices.  Moreover, since solder forms provide a unit map
from the tangent space (horizontal section) to the space of values
(vertical section), a discretization of the solder form allows one to
work within a framework that covers both scalar-valued and vector-space
valued differential forms.

We have also shown how curvature operators can be given explicit
constructions as bivector-valued two-forms in the lattice by examining
the conic singularities around the hinges $\hinge$ of $\PFtri$.  It
was found that the PF Riemann curvature operators have an
eigenspectrum with only one non-trivial eigenspace, that aligned along
the plane orthogonal to the codimension $2$ hinges.  Noticing that the
${\bf Riem}$ operator takes a form analogous to a space with constant
sectional curvature for bases in general position, we define local,
normalized curvature operators that can then be compared to local
continuum quantities.  This provides flexibility to insist that not
only should the average integrated curvature compare to the integrated
curvature over a finite domain, but that the local curvature operators
compare to sectional curvatures in the continuum.  It was then shown
how the Ricci curvature can be formulated on the dual lattice and how
the simplicial representation is viewed not as the components of ${\bf
  Rc}$ along a simplicial edge $\ls$ but the components of the double
dual of ${\bf Rc}$.

This general framework and the volume-based DEC is a purely discrete
foundation for the analysis of the geometry of PF manifolds.  The
continuum is only used at the level of the discretization and for the
locally flat behavior of irreducible hybrid cells.  We can therefore
characterize this approach as a stratified view of discrete geometry
with three distinct regimes: (1) the scale smaller than the local
discretization where one admits ignorance of the internal structure
and must assume some approximate behavior (e.g. flatness, constant
curvature, etc), (2) the discrete scale where the geometric properties
are based on the PF structure, and (3) a coarse-grained scale that
regains the continuum behavior of the manifold.  We have shown how to
treat the discrete scale by inferring behavior from the connectivity
of irreducible hybrid cells within a given domain.

These results have been applied in \cite{McDonald:RF} to a simplicial
discretization of Hamilton's Ricci flow and in earlier stages in
\cite{McDonald:RCMatter}.  These results utilize the foundations laid
in RC to open up DEC to a variety of geometric objects characterizable
as vector-space valued differential forms.

\begin{acknowledgments}
  WAM, XDG, ST Yau, acknowledge support from Air Force Research
  Laboratory Information Directorate (AFRL/RI) grant
  \#FA8750-11-2-0089 and through support from the Air Force Office of
  Scientific Research through the American Society of Engineering
  Education's Summer Faculty Fellowship Program at AFRL/RI's Rome
  Research Site.  JRM acknowledges support from a National Research
  Council Research Associateship at AFRL/RI's Rome Research Site.  We
  also wish to thank Shannon Ray, Chris Tison, Arkady Kheyfets and
  Matthew Corne for helpful discussions and suggestions on this
  manuscript.  Any opinions, findings and conclusions or
  recommendations expressed in this material are those of the
  author(s) and do not necessarily reflect the views of AFRL.
\end{acknowledgments}

\appendix

\bibliography{DRF}{  }
\bibliographystyle{apsrev4-1}

\end{document}